\documentclass[12pt]{amsart}
\usepackage{geometry}
\usepackage{graphicx} % Required for inserting images
\usepackage{adjustbox}
\usepackage{amsmath,amssymb,amsxtra,tikz,stackengine,bm,mathrsfs,bbm,tikz-cd,comment,hyperref,combelow,amsopn,float}
\usepackage{enumitem}
\usepackage{ytableau}
\usepackage{quiver}
\usetikzlibrary{calc}%
\usetikzlibrary{shapes}%
\usetikzlibrary{patterns}%
\usetikzlibrary{positioning}%
\usetikzlibrary{arrows.meta}
\usetikzlibrary{knots}
\usetikzlibrary{decorations.markings}
\usetikzlibrary{hobby}

\newtheorem{theorem}{Theorem}[section]
\newtheorem{proposition}[theorem]{Proposition}%[chapter]
\newtheorem{corollary}[theorem]{Corollary}%[chapter]
\newtheorem{lemma}[theorem]{Lemma}%[chapter]

\theoremstyle{definition}
\newtheorem{remark}[theorem]{Remark}
\newtheorem{definition}[theorem]{Definition}
\newtheorem{example}[theorem]{Example}

\numberwithin{equation}{section}

\newcommand{\bigslant}[2]{{\raisebox{.2em}{$#1$}\left/\raisebox{-.2em}{$#2$}\right.}}

\newcommand{\manifold}{\mathcal{M}}
\newcommand{\deep}{\mathcal{D}}

\newcommand{\PP}{\mathbb{P}}

\newcommand{\Z}{\mathbb{Z}}

\newcommand{\seeds}{\mathsf{Seeds}}
\newcommand{\clusters}{\mathsf{clusters}}
\definecolor{babyblueeyes}{rgb}{0.63, 0.79, 0.95}

\newcommand{\F}{\mathbb{F}}
\newcommand{\K}{\mathbb{K}}

\def\seed{\Sigma}

\def\x2{x^{(2)}}

\def\u1{u^{(1)}}
\def\w0{\Delta}

\def\cluster{\mathbf{x}}
\DeclareMathOperator{\Spec}{Spec}
\def\var{\mathcal{V}}
\def\zero{\mathbf{0}}
\def\one{\mathbf{1}}

\newcommand{\T}{\mathbb{T}}
\newcommand{\function}{\Upsilon}

\def\A{\mathsf{A}}
\def\D{\mathsf{D}}
\def\E{\mathsf{E}}

\def\x{\textbf{x}}

\definecolor{aquamarine}{rgb}{0.5, 1.0, 0.83}
\definecolor{aqua}{rgb}{0.0, 1.0, 1.0}

%\dateline{Oct 1, 2025}{May 27, 2026}{TBD}

\subjclass{13F60, 05C15}
%\Copyright{The authors. Released under the CC BY-ND license (International 4.0).}

\title[Clusters, hexagonal moves, and coverings]{Cluster tori over $\mathbb{F}_2$, hexagonal moves on triangulations, and minimal coverings of cluster manifolds}
\author{Daniel P\'erez Melesio}
\address{Facultad de Ciencias, Universidad Nacional Aut\'onoma de M\'exico. Ciudad Universitaria, CDMX, M\'exico;
\newline
{\tiny{Current address:}} Instituto de Matem\'aticas, Universidad Nacional Aut\'onoma de M\'exico. Ciudad Universitaria, CDMX, M\'exico}
\email{perezmelesiod@ciencias.unam.mx}

\author{Jos\'e Simental}
\address{Instituto de Matem\'aticas, Universidad Nacional Aut\'onoma de M\'exico. Ciudad Universitaria, CDMX, M\'exico}
\email{simental@im.unam.mx}

\begin{document}

\begin{abstract}
In this paper, we study cluster algebras over $\F_2$. By the Laurent phenomenon there is a map from the set of seeds of the cluster algebra to the corresponding cluster variety. We show that in type $A$, fibers of this map can be described in terms of certain edges of the universal polytope of triangulations of a polygon. Moreover, we show that there is a section of this map giving seeds whose corresponding cluster tori cover the cluster manifold over any field $\F$, but there are also sections giving seeds whose cluster tori do not cover the cluster manifold over any field $\F \not\cong \F_2$. 
\end{abstract}

\maketitle

\section{Introduction}

\subsection{Motivation} In this paper, we study cluster algebras over the field with two elements $\F_2$.  The distinguishing feature of this field is that a torus over $\F_2$ is set-theoretically a point, so by the Laurent phenomenon we naturally get a map
\begin{equation}\label{eq:seeds-to-points}
\mathsf{Seeds}(A) \to \Spec_{\F_2}(A),
\end{equation}
where, if $A$ is an $\F$-algebra, we denote by $\Spec_{\F}(A)$ the variety of all (unital) ring homomorphisms $A \to \F$. Outside of very few cases, the map \eqref{eq:seeds-to-points} is not injective: indeed, if the cluster algebra $A$ is Noetherian then the right-hand side of \eqref{eq:seeds-to-points} is finite, while the left-hand side is infinite unless $A$ is of finite cluster type. But even in finite-cluster type, the map \eqref{eq:seeds-to-points} is not injective outside of the cases $\A_1, \A_2$ and $\A_3$. 

Our motivation for studying the map \eqref{eq:seeds-to-points} comes from the study of the \emph{deep locus} of cluster algebras. Let $Q$ be an ice quiver, and denote by $A(Q)$ the cluster algebra of $Q$ defined over $\Z$. We will assume that $A(Q)$ is locally acyclic, cf. \cite{Muller-locally-acyclic}. If $\F$ is a field, we denote by $A_{\F}(Q) := \F \otimes_{\Z} A(Q)$. By \cite[Lemma 4.7]{BMRS}, $A_\F(Q)$ coincides with the cluster algebra of $Q$ defined over $\F$.  The \emph{cluster variety} is $\var_\F(A) := \Spec_\F(A_\F(Q))$. By the Laurent phenomenon, every cluster $\cluster$ of $A(Q)$ defines a Zariski open set $\T_{\cluster}^{\F} \cong (\F^{\times})^{\#Q_0} \subseteq \var_\F(A)$. The \emph{cluster manifold} is $\manifold_\F(A) := \bigcup_{\x} \T_\x^{\F} \subseteq \var_\F(A)$. This is a Zariski open subvariety of $\var_\F(A)$. The \emph{deep locus} if $\deep_\F(A) := \var_\F(A) \setminus \manifold_\F(A)$, that is, the complement to the union of cluster tori in $\var_\F(A)$. To put it more succinctly, a homomorphism $\varphi: A_\F(Q) \to \F$ belongs to $\deep_\F(A)$ if, for every cluster $\x$, there exists a cluster variable $x \in \x$ such that $\varphi(x) = 0$.  Note that the image of \eqref{eq:seeds-to-points} is always contained in $\manifold_{\F_2}(A)$, and in fact the map
\begin{equation}\label{eq:seeds-to-points-refined}
\seeds(A) \to \manifold_{\F_2}(A)
\end{equation}
is surjective. Note that $\manifold_\F(A)$, being a union of tori, is always a smooth $\F$-variety. So anything badly behaved on the cluster algebra can be blamed on its deep locus, cf. \cite{beyer-muller}. 

One of the difficulties in studying the deep locus is that, by definition, the cluster manifold $\manifold_\F(A)$ is typically the union of an \emph{infinite} number of cluster tori. Nevertheless, if the algebra $A$ is Noetherian then $\manifold_\F(A)$ is Noetherian with the Zariski topology, so a finite number of cluster tori suffice to cover $\manifold_\F(A)$. However, determining a specific finite set of clusters whose cluster tori cover $\manifold_\F(A)$ seems to be a challenging problem. 

We say that a set of seeds $\Omega \subseteq \seeds(A)$ is an \emph{$\F$-covering} if the union of its cluster tori (defined over $\F$) covers $\manifold_\F(A)$. As we will see, the property of being a covering depends on the field $\F$. In some cases (for example, cluster algebras coming from braid varieties \cite{CGGLSS, GLSBS, GLSB}, which include cluster algebras of finite cluster type) there is a natural set inclusion of $\Spec_{\F_2}(A)$ into $\Spec_\F(A)$ for any field $\F$, and an $\F$-covering set must contain an $\F_2$-covering set. Note that the $\F_2$-covering sets are precisely the (images of) sections of the map \eqref{eq:seeds-to-points-refined}. Thus, in this paper we study such sections, with specific focus on finite cluster type $\A$. In particular, using the usual bijection between seeds and triangulations of a convex $n$-gon we describe the fibers of \eqref{eq:seeds-to-points-refined} in terms of certain edges of the \emph{universal polytope} of triangulations of the $n$-gon \cite{BFS-universal, Jesus-universal}, that we call \emph{hexagonal flips}. Note that the usual flips, corresponding to cluster mutation, are also edges of the universal polytope (in fact, they are precisely the edges of Gelfand-Kapranov-Zelevinsky's \emph{secondary polytope}). It would be interesting to find whether other edges of the universal polytope have a cluster-theoretic interpretation. 

\subsection{Results} Our first result concerns point counts of \emph{acyclic} cluster varieties over $\F_2$. Recall that an ice quiver $Q$ is acyclic if $Q$ has no directed cycles consisting only of mutable vertices. In particular, the mutable part of such a quiver must have a sink.

\begin{lemma}\label{lem:point-count-intro}
Let $Q$ be an acyclic ice quiver, and let $x$ be a mutable vertex that is a sink of the mutable part of $Q$. Let $Q^{-x}$ be the quiver obtained from $Q$ by deleting $x$, and let $Q^{-N(x)}$ be the quiver obtained by deleting $x$ and all vertices adjacent to it. Then,
\begin{equation}\label{eq: point count recursion}
\#\var_{\F_2}(A(Q)) = \#\var_{\F_2}(A(Q^{-x}) + 2\#\var_{\F_2}(A(Q^{-N(x)})).
\end{equation}
\end{lemma}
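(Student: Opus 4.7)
The plan is to split $\var_{\F_2}(A(Q))$ according to the value of $\varphi(x)\in\F_2=\{0,1\}$ and identify the two pieces with the $\F_2$-points of cluster varieties of smaller quivers. Writing $V_i := \{\varphi\in\var_{\F_2}(A(Q)):\varphi(x)=i\}$, and using that $t^2=t$ in $\F_2$ so that $x(x-1)$ lies in the kernel of every $\varphi$, one obtains a decomposition $\var_{\F_2}(A(Q)) = V_0 \sqcup V_1$ with $V_i = \var_{\F_2}(A_{\F_2}(Q)/(x-i))$. It then suffices to prove $\#V_1 = \#\var_{\F_2}(A(Q^{-x}))$ and $\#V_0 = 2\#\var_{\F_2}(A(Q^{-N(x)}))$.

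Both identifications rely on the presentation of acyclic cluster algebras due to Berenstein--Fomin--Zelevinsky: since $Q$ is acyclic, $A(Q)$ is generated by the initial cluster variables together with their first mutations (and the frozen variables, which we take to be invertible so that $\varphi(f)=1$ for every frozen $f$), modulo only the exchange relations $vv' = M_v+N_v$ at each mutable vertex $v$, where $M_v = \prod_{u\to v}u$ and $N_v = \prod_{v\to w}w$. Specializing $x\mapsto 1$ is clean because $x$ is a sink of the mutable part: $x$ does not appear in any $M_v$ for mutable $v\neq x$ (no arrow $x\to v$), and $x$ appears in $N_v$ only when $v\to x$. Setting $x=1$ removes all $x$-factors from $N_v$, turning every exchange relation at $v\neq x$ into the exchange relation at $v$ in $Q^{-x}$, while the relation at $x$ itself becomes $x'=M_x+N_x$ and eliminates the generator $x'$. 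This yields the isomorphism $A_{\F_2}(Q)/(x-1)\cong A_{\F_2}(Q^{-x})$ and hence $\#V_1 = \#\var_{\F_2}(A(Q^{-x}))$.

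Specializing $x\mapsto 0$ is the more delicate step. The relation at $x$ becomes $0 = M_x+N_x$; since frozens are invertible, $N_x=1$, forcing $M_x=1$ and hence $\varphi(v)=1$ for every mutable in-neighbor $v$ of $x$. For each such $v$, the relation $vv' = M_v+N_v$ collapses to $vv' = M_v$ (the $x$-factor in $N_v$ vanishes), and combined with $\varphi(v)=1$ this determines $\varphi(v')=\varphi(M_v)$. After eliminating the pairs $(v,v')$ for $v\in N(x)$, every surviving relation (at $v\notin N(x)\cup\{x\}$) has its $N(x)$-factors equal to $1$ and disappearing, leaving precisely the exchange relations of $Q^{-N(x)}$. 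The generator $x'$ is wholly unconstrained and contributes the factor of $2$. This gives $A_{\F_2}(Q)/(x) \cong A_{\F_2}(Q^{-N(x)})[x']$ and hence $\#V_0 = 2\#\var_{\F_2}(A(Q^{-N(x)}))$.

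The main obstacle I expect is securing the presentation of $A(Q)$ rigorously: the argument above requires that the exchange relations at the initial seed are a complete set of defining relations for an acyclic cluster algebra, so that the quotients $A_{\F_2}(Q)/(x-i)$ realize the combinatorial reductions above with no hidden constraints. A secondary subtlety arises if the paper's convention does not invert all frozens: then $N_x$ need not specialize to $1$, the forcing at $N(x)$ splits into cases according to the values of $\varphi(f)$ for frozens $f$ adjacent to $x$, and one must verify that these cases telescope to the same count $2\#\var_{\F_2}(A(Q^{-N(x)}))$ by applying the same forcing argument within each case.
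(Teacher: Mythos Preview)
Your proof is correct and follows essentially the same approach as the paper: both split on the value $\varphi(x)\in\{0,1\}$, invoke the Berenstein--Fomin--Zelevinsky presentation of an acyclic cluster algebra (which the paper records just before the lemma), and then argue that $\varphi(x)=1$ gives a bijection with $\var_{\F_2}(A(Q^{-x}))$ while $\varphi(x)=0$ forces $\varphi(v)=1$ for every mutable in-neighbor $v$ and leaves $\varphi(x')$ free, yielding a $2$-to-$1$ map to $\var_{\F_2}(A(Q^{-N(x)}))$. The paper's conventions do invert all frozens, so your secondary worry does not arise.

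One small overstatement: the algebra isomorphism $A_{\F_2}(Q)/(x)\cong A_{\F_2}(Q^{-N(x)})[x']$ is not literally true. In the quotient $A_{\F_2}(Q)/(x)$ the mutable in-neighbors $v$ of $x$ become units (their product equals the invertible $N_x$), but they are not set equal to $1$ in the ring, and they may still appear in the exchange monomials $M_w,N_w$ for $w\notin N(x)$; so the quotient carries extra Laurent variables that $A_{\F_2}(Q^{-N(x)})[x']$ lacks. This does not damage your argument, because you are only after $\F_2$-points: any unit evaluates to $1$ under $\varphi$, so at the level of $\Spec_{\F_2}$ the two algebras have the same number of points, exactly as your preceding $\varphi$-level reasoning shows. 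Just replace the ring isomorphism claim by the bijection of $\F_2$-points you have already established.
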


Note that \eqref{eq: point count recursion} does not depend on the number of frozens. Indeed, we make the convention that frozen variables are always invertible, so every point in the cluster variety has to evaluate to $1$ in each frozen vertex. See, however, Remark \ref{rmk:degenerate-case}. We also remark that \cite[Proposition 3.9]{LSII} gives a formula for the point count of an acyclic cluster variety of really full rank over a finite field in terms of the combinatorics of the anticliques of the quiver $Q$. 

With \eqref{eq: point count recursion} in hand, it is an easy exercise to compute the number of $\F_2$-points of cluster varieties of finite cluster type, see Table \ref{table:finite-cluster-type}. We remark that if $Q$ is of type $\A_1$, then $\#\var_{\F_2}(A(Q)) = 3$, see Remark \ref{rmk:small-cases}.

\begin{table}
\begin{center}
\begin{tabular}{|c|c|c|c|}
\hline
Cluster type & $\#\var_{\F_2}$ & $\#\deep_{\F_2}$  & $\#\manifold_{\F_2}$\\ [0.25em]
\hline
$\A_{2n}$ & $\frac{2^{2n+2} - 1}{3}$ & 0 & $\frac{2^{2n+2} -1}{3}$   \\[0.25em]
\hline
$\A_{2n+1}$ & $\frac{2^{2n+3} + 1}{3}$ & 1 & $\frac{2^{2n+3} -2}{3}$ \\[0.25em] 
\hline
$\D_{2n}$ $(n \geq 2)$ & $\frac{5(2^{2n}) + 7}{3}$ & $\frac{2^{2n-1} + 13}{3}$  & $3(2^{2n-1}) - 2$ \\ [0.25em]
\hline
$\D_{2n+1}$ $(n \geq 2)$ & $\frac{5(2^{2n+1}) - 7}{3}$ & $\frac{2^{2n} - 1}{3}$  & $3(2^{2n}) - 2$ \\ [0.25em] \hline
$\E_6$ & $93$ & $0$ & $93$  \\ [0.25em]
\hline
$\E_7$ & $195$ & $5$  & $190$  \\ [0.25em]
\hline
$\E_8$ & $381$ & $0$ & $381$ \\ [0.25em]
\hline 
\end{tabular}
\end{center}
\caption{Number of points of the cluster variety, deep locus, and cluster manifold, of simply-laced finite-type cluster varieties over $\F_2$. The second column is obtained from Lemma \ref{lem:recursion} below, and the third column uses \cite[Theorem 6.8]{deep-locus}: the deep locus of a cluster variety of type $\E_7$ is a cluster variety of type $\A_2$; the deep locus of a cluster variety of type $\D_{2n+1}$ is a cluster variety of type $\A_{2n-2}$; and the deep locus of a cluster variety of type $\D_{2n}$ is the union of a cluster variety of type $\A_{2n-3}$ and two cluster varieties of type $\A_{1}$, so that the intersection of any two of them is a single point.}
\label{table:finite-cluster-type}
\end{table}

As soon as $n > 3$, the number of $\F_2$-points of a cluster variety of type $\A_n$ is smaller than the number of seeds of the corresponding type. We recall that seeds of cluster type $\A_n$ are in bijection with triangulations of a regular $(n+2)$-gon. 

\begin{theorem}\label{thm:main-intro}
Let $\seed_1, \seed_2$ be two seeds of a cluster algebra of type $\A_n$, corresponding to the triangulations $T_1$ and $T_2$, respectively. Then, $\seed_1$ and $\seed_2$ determine the same point in $\manifold_{\F_2}(\A_n)$ if and only if $T_2$ can be obtained from $T_1$ applying a sequence of the local moves on triangulatons from Figure \ref{fig:hexagonal-moves-intro}.
\end{theorem}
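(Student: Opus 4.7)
My plan splits the theorem into the two directions, treating the forward direction as a finite check and the reverse direction via a counting argument.

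For the forward direction (hexagonal moves preserve the $\F_2$-point), I observe that each hexagonal move is by definition supported inside a hexagonal sub-polygon of the $(n+3)$-gon, so every diagonal outside this sub-hexagon is unchanged by the move. Moreover, the boundary of the sub-hexagon consists of edges of the ambient polygon and of diagonals of the ambient triangulation, all of which take value $1$ over $\F_2$ at both endpoints of the move. The claim therefore reduces to a finite case-check inside the type-$\A_3$ cluster algebra of a hexagon: for each local move depicted in Figure \ref{fig:hexagonal-moves-intro}, I would verify that the two hexagonal triangulations assign the same $\F_2$-value to each of the nine diagonals of the hexagon. Each such check is routine using the exchange relations over $\F_2$, or equivalently via Schiffler's $T$-path / snake-graph matching formula reduced modulo $2$.

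For the reverse direction I proceed by counting. Let $\sim$ denote the equivalence relation on triangulations generated by the hexagonal moves. The forward direction produces a well-defined surjection $\seeds(\A_n)/{\sim} \twoheadrightarrow \manifold_{\F_2}(\A_n)$, and since both sides are finite it suffices to prove the opposite inequality $\#(\seeds(\A_n)/{\sim}) \leq \#\manifold_{\F_2}(\A_n)$. Combining Table \ref{table:finite-cluster-type} with Lemma \ref{lem:point-count-intro} applied to the linear $\A_n$-quiver (and subtracting the contribution of the deep locus) gives an explicit recursion for $\#\manifold_{\F_2}(\A_n)$, and the plan is to prove the same recursion for $\#(\seeds(\A_n)/{\sim})$ by decomposing triangulations along a chosen boundary triangle of the polygon.

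The step I expect to be hardest is controlling the interaction between the inherently non-local nature of hexagonal moves and the recursive sub-polygon structure. Standard flips restrict cleanly when one slices the polygon along a diagonal, but a hexagonal move acts on three diagonals simultaneously and can straddle such a slice, so it does not descend to smaller polygons a priori. To overcome this I would establish a normal-form lemma: every $\sim$-class contains a representative in which the triangle adjacent to a chosen boundary edge is standardized (i.e.\ has a prescribed ear type), after which the ear can be peeled off and the induction on $n$ goes through. Producing such normal forms is the combinatorial heart of the argument and should be guided by the realization of hexagonal moves as edges of the universal polytope of triangulations \cite{BFS-universal, Jesus-universal}, whose face structure dictates which sequences of hexagonal moves are available to standardize the ear.
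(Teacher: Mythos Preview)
Your forward direction matches the paper's: a finite local check on the hexagon, carried out explicitly in Figure~\ref{fig:triangulations-hexagon}.

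Your reverse direction is a genuinely different strategy from the paper's. The paper argues directly by induction on the polygon size: given $T,T'$ with $c(T)=c(T')$, it splits on whether they share a diagonal. If they do, cut along it and apply the induction hypothesis to each piece. If not, the paper chooses a diagonal $ij\in T$ with a large enough side, locates a diagonal of $T'$ relative to $ij$, and --- using the notion of a \emph{valid diagonal} for the common coloring (Definition~\ref{def:valid-diagonal-point}, Lemma~\ref{lem:invalid-diagonal}) --- applies the induction hypothesis inside a strict sub-polygon to replace $T'$ by hexagonal moves until it shares a diagonal with $T$, reducing to the first case. The argument is constructive: it outputs an explicit sequence of moves.

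Your counting approach has a real gap exactly where you flag it. The normal-form lemma (every $\sim$-class contains a representative with a prescribed ear at a fixed boundary edge) is not proven, and it carries essentially all of the difficulty: showing that hexagonal moves can transport an ear to a prescribed location is the same species of problem as the paper's Case~2 analysis, and your appeal to the face structure of the universal polytope is suggestive but not an argument. There is also a numerical wrinkle you have not addressed. Lemma~\ref{lem:point-count-intro} gives a recursion for $\#\var_{\F_2}$; after subtracting the deep point the recursion for $\#\manifold_{\F_2}(\A_n)$ becomes $a_n=a_{n-1}+2a_{n-2}+1$ (check: $10=5+2\cdot 2+1$, $21=10+2\cdot 5+1$). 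The inhomogeneous $+1$ does not arise from simply peeling off an ear and passing to $\A_{n-1}$, so your decomposition must track an additional piece of data beyond the sub-polygon triangulation. Without specifying what that data is and why it is $\sim$-invariant, the counting does not close, and once you do specify it you will likely find yourself doing the paper's case analysis in another guise.
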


\begin{figure}
    \begin{center}
\begin{tikzpicture}
   \newdimen\R
\R=1cm
   \draw (0:\R)
   \foreach \x in {60,120,...,360} {  -- (\x:\R) };
   \draw (60:\R)--(180:\R)--(360:\R)--(240:\R);
\draw[<->] (1.5,0) -- (2.5, 0);
\begin{scope}[shift={(4,0)}]
   \draw (0:\R)
   \foreach \x in {60,120,...,360} {  -- (\x:\R) };
   \draw (60:\R)--(300:\R)--(120:\R)--(240:\R);
\end{scope}
\begin{scope}[shift={(7,0)}]
   \draw (0:\R)
   \foreach \x in {60,120,...,360} {  -- (\x:\R) };
   \draw (60:\R)--(180:\R)--(300:\R)--(60:\R);
\end{scope}
\draw[<->] (8.5,0) -- (9.5,0);
\begin{scope}[shift={(11,0)}]
   \draw (0:\R)
   \foreach \x in {60,120,...,360} {  -- (\x:\R) };
   \draw (360:\R)--(120:\R)--(240:\R)--(360:\R);
\end{scope}
\end{tikzpicture}
\end{center}
\caption{Local hexagonal moves on triangulations. The figure on the left-hand side relates the two \lq zig-zag\rq \, triangulations joining the same pair of antipodal points, and the figure on the right-hand side relates the two inscribed triangles in the hexagon.}
\label{fig:hexagonal-moves-intro}
\end{figure}
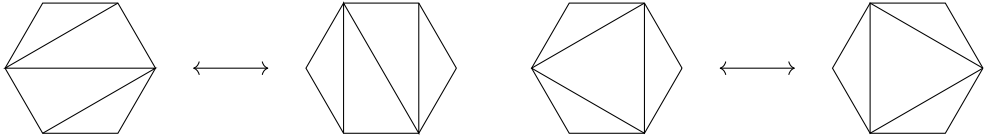

Note that, from Table \ref{table:finite-cluster-type} we see that an $\F_2$-covering set for a cluster variety of type $\A_n$ must have at least
\begin{equation}\label{eq:minimal-number-covering-set}
\begin{cases} \frac{2^{n+2}+(-1)^{n+1}}{3} \; \text{elements} & \text{if $n$ is even}, \\
\frac{2^{n+2} + (-1)^{n+1}}{3} - 1 \; \text{elements} & \text{if $n$ is odd.}\end{cases}
\end{equation}

The next result shows that, for any field, we can find a covering set with exactly \eqref{eq:minimal-number-covering-set} elements. However, there are $\F_2$-covering sets that are not covering sets for any other field $\F$. 

\begin{theorem}\label{thm:covering-intro}
Let $n>0$, and consider a cluster algebra of type $\A_n$. Then
\begin{enumerate}
\item[(a)] There exists a minimal $\F_2$-covering set that is also a minimal $\F$-covering set for any field $\F$.
\item[(b)] In type $\A_{9}$, there exists an $\F_2$-covering set that is not an $\F$-covering set for any field $\F \not\cong \F_2$. 
\end{enumerate}
\end{theorem}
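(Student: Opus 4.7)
The plan is to establish (a) and (b) independently, both relying on the characterization of $\F_2$-equivalence classes of seeds by hexagonal moves (Theorem \ref{thm:main-intro}). A minimal $\F_2$-covering set is, by that theorem, precisely a system of representatives, one per hexagonal-move class of triangulations of the polygon associated to $\A_n$.

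For (a), I would construct an explicit minimal $\F_2$-covering set $\Omega$ and verify that its cluster tori cover $\manifold_\F(\A_n)$ for every field $\F$. The natural approach is induction on $n$ using the recursion $a_n = a_{n-1} + 2\, a_{n-2}$ with $a_n := \#\var_{\F_2}(\A_n)$, which follows from Lemma \ref{lem:point-count-intro}. Fix a boundary vertex $v$ of the polygon and partition triangulations according to the local structure at $v$: either $v$ is an ear, which reduces to $\A_{n-1}$ by removing $v$; or $v$ is the apex of a triangle with one of two choices of opposite edge, each reducing to $\A_{n-2}$. The inductively-built minimal covering sets for the smaller types combine to give $\Omega$ of the required minimal size for $\A_n$. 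That $\Omega$ covers $\manifold_\F$ for arbitrary $\F$ should then follow inductively: any $\F$-point in $\manifold_\F(\A_n)$ with a specific local diagonal vanishing restricts to a point in the manifold of a sub-cluster algebra of type $\A_{n-1}$ or $\A_{n-2}$, where the inductive hypothesis applies.

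For (b), I would construct an explicit counterexample in $\A_{11}$ exploiting the inscribed-triangle hexagonal move (right side of Figure \ref{fig:hexagonal-moves-intro}). Within the associated polygon, fix a hexagonal sub-polygon on six well-chosen vertices and consider the two inscribed-triangle triangulations inside it; these give the same $\F_2$-point but distinct $\F$-points for $\F \not\cong \F_2$. By designing $\Omega$ to consistently pick one inscribed triangle in every such $\F_2$-class, together with specific choices of representatives in other classes that prevent the deleted diagonals from reappearing, one produces an $\F$-point $p$ over some field $\F \not\cong \F_2$ that lies in $\manifold_\F$ via the complementary inscribed triangle yet vanishes on some diagonal of every representative in $\Omega$. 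The necessity of $n \geq 11$ should come from a counting and embedding argument: the polygon must have enough vertices both to host the obstructing hexagonal configuration and to allow the complementary triangulation enough auxiliary room for the witness $p$ to actually lie in $\manifold_\F$.

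The main obstacle in both parts is the covering verification. In (a), showing that a single canonical representative per $\F_2$-class simultaneously suffices over every field requires a careful analysis of how hexagonal moves interact with nonvanishing conditions on cluster variables; one must ensure the recursive decomposition truly respects the $\F_2$-equivalence classes. In (b), explicitly producing the witness $\F$-point $p \in \manifold_\F(\A_{11})$ and checking that it avoids every chosen cluster torus is delicate combinatorially, and will likely require a detailed case analysis of the relevant cluster-variable relations or a computer-assisted verification on the specific $14$-gon.
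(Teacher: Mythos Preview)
Your plan for (a) has a genuine gap. The recursion $a_n = a_{n-1} + 2a_{n-2}$ from Lemma~\ref{lem:point-count-intro} partitions \emph{points} of $\var_{\F_2}$ according to the value of a sink variable, not triangulations or hexagonal-move classes. Your proposed partition of triangulations (``$v$ is an ear'' versus ``$v$ is the apex with one of two choices of opposite edge'') does not actually carve the set of triangulations, nor the set of hexagonal-move classes, into pieces of sizes matching $\A_{n-1}$ and $2\cdot\A_{n-2}$: already for the hexagon ($\A_3$) there are $14$ triangulations, of which $5$ have a given vertex as an ear, and $14-5=9 \neq 2\cdot 2$. Even at the level of $\F_2$-classes the bookkeeping is subtle because the recursion counts the full variety including the deep point, while a covering only sees the manifold. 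So the inductive scheme as stated does not get off the ground. The paper instead avoids induction entirely: it defines an explicit greedy algorithm $\Upsilon$ that reads a point $y\in X'_\F(m)$ from left to right, locating successive indices where $y_i\notin\{\zero,\infty\}$ and where $y_i=\zero$ alternately, and draws fans from those indices; it then proves the key identity $\Upsilon = \Upsilon\circ c\circ\Upsilon$, which forces $\#\mathrm{image}(\Upsilon)=\#X'_{\F_2}(m)$ and simultaneously shows the image is an $\F$-covering for every $\F$.

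For (b) your instinct to produce a witness point not covered by a carefully chosen section is correct, but the mechanism you propose (biasing toward one inscribed triangle in every hexagonal class) is vague and would be hard to control. The paper's argument is organized around a cleaner invariant: for $y\in X'_\F(m)$ let $I(y)$ be the set of diagonals invalid for $y$ (Definition~\ref{def:valid-diagonal-point}). A short lemma shows that if some $y$ satisfies $I(y)\not\subseteq I(z)$ for \emph{every} $z\in X'_{\F_2}(m)$, then choosing for each $z$ a triangulation using a diagonal in $I(y)\setminus I(z)$ yields an $\F_2$-covering that misses $y$. The paper then exhibits a specific $y$ using four colors $\zero,\infty,a,b$ (so any $\F\not\cong\F_2$ works), whose invalid diagonals are exactly four inscribed triangles, and checks by a short case analysis on the forced values of $z_i$ that no $\F_2$-point $z$ can absorb all twelve of these diagonals into $I(z)$. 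This replaces your open-ended search with a finite verification on a single concrete $y$.
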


We believe that, in fact, for any $n \geq 9$ there exists an $\F_2$-covering set that is not an $\F$-covering set for any field $\F \not\cong \F_2$, but we do not show this.

\subsection{Structure of the paper} In Section \ref{sec:cluster-algebras} we recall general results on acyclic cluster algebras and prove Lemma \ref{lem:point-count-intro} as Lemma \ref{lem:recursion}. In Section \ref{sec:type-A} we focus on type $\A$ cluster algebras. First, we give a geometric model for the corresponding cluster variety that works over any field, and examine the cluster tori combinatorially. In this section, we prove Theorem \ref{thm:main-intro} as Theorem \ref{thm:main-text}. Finally, in Section \ref{sec:minimal-coverings} we study minimal covering sets in type $\A$, we prove Theorem \ref{thm:covering-intro}(a) as Proposition \ref{prop:covering-1}, and in Section \ref{sec:counterexample} we prove Theorem \ref{thm:covering-intro}(b). 

\subsection*{Acknowledgments} This work is part of the first author's bachelor's thesis at Facultad de Ciencias of UNAM under the supervision of the second author. We are grateful to Jesús de Loera for useful conversations, and Melissa Sherman-Bennett for useful conversations and comments on am earlier draft of this work. We also thank the anonymous referee for a careful reading and suggestions that improved the readability of the paper. Our work was partially supported by SECIHTI Project CF-2023-G-106 and UNAM'S PAPIIT Grant IA101526.

\section{Cluster algebras}\label{sec:cluster-algebras}

In this section, we succinctly recall the definition of cluster algebras as well as some preliminary results on the theory. 

\subsection{Definition} Let $R$ be an integral domain, of arbitrary characteristic. The main combinatorial input defining a cluster algebra (over $R$) is that of a \emph{seed}. Consider a purely transcendental extension $R \subseteq \K$ of transcendence degree $n+m$, i.e. $\K \cong R(y_1, \dots, y_{n+m})$ for algebraically independent (over $R$) elements $y_1, \dots, y_{n+m}$. A (rank $n+m$) extended seed $\seed = (\x, Q)$ consists of 
\begin{enumerate}
\item A set $\x = \{x_1, \dots, x_n, x_{n+1}, \dots, x_{n+m}\} \subseteq \K$ such that $\K = R(x_1, \dots, x_{n+m})$. This set is called the \emph{cluster} of the seed $\seed$.
\item A quiver $Q$ with $n+m$ vertices, labeled by $1, \dots, n+m$. The vertices $1, \dots, n$ are called \emph{mutable}, and the vertices $n+1, \dots, n+m$ are \emph{frozen}. We will assume that $Q$ has no loops or directed $2$-cycles and that there are no arrows between frozen vertices. 
\end{enumerate}

Given a mutable vertex $k \leq n$, the process of \emph{mutation} (see e.g. \cite[Chapter 2]{FWZbook1}) produces another seed $\mu_k(\seed) = (\mu_k(\x), \mu_k(Q))$. The cluster $\mu_k(\x)$ is of the form $\mu_k(\x) = \x \setminus \{x_k\} \cup \{x'_k\}$, where $x'_k$ is defined by the \emph{exchange relation}
\begin{equation}\label{eq:mutation}
x'_kx_k = \prod_{\substack{\alpha \in Q_1 \\ s(\alpha) = k}}x_{t(\alpha)} + \prod_{\substack{\beta \in Q_1 \\ t(\beta) = k}} x_{s(\beta)}.
\end{equation}

Mutation is involutive in that $\mu_k(\mu_k(\seed)) = \seed$. Thus, the following is an equivalence relation among seeds:
\[
\seed \sim \seed' \qquad \text{if} \; \seed \; \text{can be reached from} \; \seed' \; \text{by an iterated sequence of mutations.}
\]

Note that if $\seed' = (\x', Q')$ then $x_{n+1}, \dots, x_{n+m} \in \x'$. 

\begin{definition}
The \emph{cluster algebra} $A_R(\seed)$ is the $R[x_{n+1}^{\pm1}, \dots, x_{n+m}^{\pm1}]$-subalgebra of $\K$ generated by the set
\[
\mathbf{X} = \bigcup_{\seed \sim \seed' = (\x', Q')} \x'.
\]
\end{definition}

Note that any seed $\seed'$ mutation equivalent to $\seed$ gives rise to an isomorphic algebra, $A_R(\seed) \cong A_R(\seed')$. Given a cluster algebra $A = A_R(\seed)$ we will denote by $\seeds(A)$ the set of all seeds that are mutation equivalent to $\seed$. Similarly, we denote by $\clusters(A)$ the set of all clusters of seeds that are mutation equivalent to $\seed$. 

\begin{remark}\label{rmk:degenerate-case}
As in \cite{BMRS}, we will make the assumption that if $Q$ contains an isolated vertex then this vertex must be frozen. This assumption is made to deal with the case when the characteristic of $R$ is $2$. Indeed, if $k$ is an isolated mutable vertex, then the exchange relation \eqref{eq:mutation} becomes $x_kx'_k = 1 + 1 = 2$, so if $\mathrm{char}(R) = 2$ then one of the variables $x_k, x_k'$ must be zero and mutation ceases to be uniquely defined. 
\end{remark}

\subsection{Cluster tori and the deep locus} 
We now assume that the base ring for a cluster algebra is a field $\F$. We denote by
\[
\var_\F(\seed) := \Spec_{\F}(A(\seed)),
\]
i.e., the variety of all unital ring homomorphisms $A(\seed) \to \F$. Now let $\x' \in \clusters(A)$. The \emph{Laurent phenomenon}, cf. \cite{FZ_laurent}  asserts that
\begin{equation}\label{eq:laurent}
A[(x'_1)^{-1}, \dots, (x'_n)^{-1}, x_{n+1}^{-1}, \dots, x_{n+m}^{-1}] = \F[(x'_1)^{\pm 1}, \dots, (x'_n)^{\pm 1}, x_{n+1}^{\pm 1}, \dots, x_{n+m}^{\pm 1}]
\end{equation}

Geometrically, the Laurent phenomenon asserts that the (Zariski open) locus of points $\varphi \in \Spec_\F(A)$ satisfying $\varphi(x'_i) \neq 0$ for all $i = 1, \dots, n$ is isomorphic to a torus $(\F^{\times})^{n+m}$. We call this locus a \emph{cluster torus}, and denote it by $\T_{\x'}$. Moreover, we call the set
\[
\manifold_\F(\seed) := \bigcup_{\x \in \clusters(A)} \T_{\x} \subseteq \var_\F(\seed).
\]
the \emph{cluster manifold} of $A$. The complement $\deep_\F(\seed) := \var_\F(\seed) \setminus \manifold(\seed)$ is called the \emph{deep locus} of $A(\seed)$. 

\begin{remark}\label{rmk:frozens-dont-matter-f2}
    Note that in this paper we take the convention that frozen variables are invertible in $A(\seed)$. In particular, if $\varphi \in \var_\F(\seed)$ and $x_{i}$ is a frozen variable, then $\varphi(x_i) \neq 0$. If, moreover, $\F = \F_2$, this forces $\varphi(x_i) = 1$. Note that this implies that, if $\seed$ and $\seed'$ are two seeds whose mutable parts are equal but which may differ in their frozen part, then
    \[
    \var_{\F_2}(\seed) = \var_{\F_2}(\seed').
    \]
     For example, when talking about a quiver of type $\A_1$, we will assume, keeping in line with Remark \ref{rmk:degenerate-case}, that the associated quiver is $\circ \to \square$. 
\end{remark}

\subsection{The acylic case} We now assume that the seed $\seed$ is such that the quiver $Q$ is acyclic, that is, there are no directed cycles involving only mutable vertices. In this case, we have (cf. \cite[Corollary 1.21]{BFZ})
\[
A(\seed) \cong \bigslant{\F[x_1, x'_1 \dots, x_{n}, x'_n, x_{n+1}^{\pm 1}, \dots, x_{n+m}^{\pm 1}]}{\left(x_kx'_k - \prod_{\substack{\alpha \in Q_1 \\ s(\alpha) = k}}x_{t(\alpha)} - \prod_{\substack{\beta \in Q_1 \\ t(\beta) = k}} x_{s(\beta)}\right)}
\]
where $k$ in the generating set of the ideal runs from $1$ to $n$. 
Thus, an element $\varphi \in \var_{\F}(\seed)$ is simply a collection of $2n + m$ elements 
\[\varphi(x_1), \dots, \varphi(x_{n+m}), \varphi(x'_1), \dots, \varphi(x'_n) \in \F
\]
such that $\varphi(x_i)$ is invertible for $i > n$ and $\varphi(x_i)\varphi(x'_i)$ satisfies the exchange relation \eqref{eq:mutation}. This has the following consequence.

\begin{lemma}\label{lem:recursion}
Let $\seed = (\cluster, Q)$ be an acyclic seed, and let $i \in Q_0$ be a mutable sink or source. Let $\seed^{-i}$ be seed obtained from $\seed$ by deleting the vertex $i$ from $Q$ and the variable $x_i$ from $\cluster$, and let $\seed^{-N(i)}$ be the seed whose quiver is obtained from $Q$ by deleting the vertex $i$ and all of its neighbors, and its cluster is obtained by deleting the corresponding variables from $\cluster$. Then,
\[
\#\var_{\F_2}(\seed) = \#\var_{\F_2}(\seed^{-i}) + 2\#\var_{\F_2}(\seed^{-N(i)}).
\]
\end{lemma}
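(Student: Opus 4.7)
The plan is to use the explicit presentation of $A(\seed)$ as a quotient of a polynomial ring displayed just above the statement. A point $\varphi \in \var_{\F_2}(\seed)$ amounts to a choice of $\varphi(x_j), \varphi(x'_k) \in \F_2$ satisfying $\varphi(x_j) = 1$ for every frozen $j$ and the exchange relation at every mutable vertex. I would partition $\var_{\F_2}(\seed)$ according to the value of $\varphi(x_i) \in \{0,1\}$. The sink and source cases are symmetric: reversing arrows swaps the plus and minus monomials in every exchange relation, and signs are invisible over $\F_2$. So assume $i$ is a sink of the mutable part of $Q$.

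Every frozen variable appearing in the exchange relation at $i$ evaluates to $1$ under $\varphi$, so the relation at $i$ collapses to
\[
\varphi(x_i)\varphi(x'_i) \;=\; 1 + \prod_{j \in N(i)} \varphi(x_j),
\]
where the product depends non-trivially only on mutable neighbors. If $\varphi(x_i) = 1$, this determines $\varphi(x'_i)$ uniquely and imposes no further constraint. For each neighbor $j$ of $i$, the variable $x_i$ appears as a factor in exactly one monomial of the exchange relation at $j$; substituting $\varphi(x_i) = 1$ deletes that factor and produces precisely the exchange relation at $j$ in $\seed^{-i}$. For $k \notin N(i) \cup \{i\}$ the exchange relation at $k$ contains no $x_i$ and is unchanged. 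Hence this case contributes $\#\var_{\F_2}(\seed^{-i})$ points.

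If instead $\varphi(x_i) = 0$, the relation at $i$ forces $\prod_{j \in N(i)} \varphi(x_j) = 1$, which over $\F_2$ is equivalent to $\varphi(x_j) = 1$ for every $j \in N(i)$ (the frozens among them already satisfied this). Then $\varphi(x'_i)$ is unconstrained, giving a factor of $2$. For each mutable $j \in N(i)$, the exchange relation at $j$ now determines $\varphi(x'_j)$ uniquely in terms of the remaining data. For each remaining mutable $k \notin N(i) \cup \{i\}$, substituting $\varphi(x_j) = 1$ for every $j \in N(i)$ into the exchange relation at $k$ removes exactly the factors $x_j$ with $j \in N(i)$, which reproduces the exchange relation at $k$ in $\seed^{-N(i)}$. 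Therefore this case contributes $2\cdot\#\var_{\F_2}(\seed^{-N(i)})$ points, and adding the two cases yields the desired recursion.

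The essential bookkeeping check — and the only place one can slip — is that deleting a vertex $j$ from the quiver corresponds exactly to substituting $1$ for $x_j$ in every exchange relation in which it appears, which is transparent from \eqref{eq:mutation}. One should also confirm that any isolated mutable vertices that might appear in $\seed^{-i}$ or $\seed^{-N(i)}$ are handled consistently with the convention of Remark \ref{rmk:degenerate-case}, but this is only a cosmetic concern in the $\F_2$ count.
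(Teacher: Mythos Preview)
Your argument is correct and follows essentially the same route as the paper's proof: partition $\var_{\F_2}(\seed)$ by the value $\varphi(x_i)\in\{0,1\}$, and in each case set up the evident bijection with $\var_{\F_2}(\seed^{-i})$ (when $\varphi(x_i)=1$) or with two copies of $\var_{\F_2}(\seed^{-N(i)})$ (when $\varphi(x_i)=0$, forcing $\varphi(x_j)=1$ for all neighbors $j$ and leaving $\varphi(x'_i)$ free). Your write-up is slightly more explicit than the paper's in checking that substituting $1$ for a deleted variable reproduces the exchange relations of the smaller seed, and in noting the sink/source symmetry; these are welcome but not substantive differences.
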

\begin{proof}
Let $\varphi \in \var_{\F_2}(\seed)$. We examine the possible values of $\varphi(x_i)$. 

If $\varphi(x_i) = 1$, then the restriction of $\varphi$ to all variables which are not $x_i, x'_i$ defines an element of $\var_{\F_2}(\seed^{-i})$. Moreover, an element of $\tilde{\varphi} \in \var_{\F_2}(\seed^{-i})$ defines an element $\varphi \in \var_{\F_2}(\seed)$ by defining $\varphi(x_i) = 1$ and $\varphi(x'_i) = \prod_{\substack{\alpha \in Q_1 \\ s(\alpha) = i}}\tilde{\varphi}(x_{t(\alpha)}) + \prod_{\substack{\beta \in Q_1 \\ t(\beta) = i}}\tilde{\varphi}(x_{t(\beta)})$. Thus, the set of elements $\varphi \in \var_{\F_2}(\seed)$ such that $\varphi(x_i) = 1$ are in bijection with $\var_{\F_2}(\seed^{-i})$. 

If $\varphi(x_i) = 0$ then we must have (say, in the case where $i$ is a sink)
\[
0 = 1 + \prod_{\substack{\beta \in Q_1 \\ t(\beta) = i}}\varphi(x_{s(\beta)})
\]
so that $\varphi(x_{k}) = 1$ for every $k$ that has an arrow $k \to i$. Thus, $\varphi$ defines an element of $\var_{\F_2}(\seed^{-N(i)})$. On the other hand, if $\tilde{\varphi} \in \var_{\F_2}(\seed^{-N(i)})$, then $\tilde{\varphi}$ defines two distinct elements $\varphi_1, \varphi_2 \in \var_{\F_2}(\seed)$ by declaring $\varphi_{1,2}(x_k) = 1$ for every $k$ that has an arrow $k \to i$, $\varphi_{1,2}(x_i) = 0$ (note that this specifies uniquely $\varphi_{1,2}(x'_k)$ if there is an arrow $k \to i$) and $\varphi_1(x'_i) = 1$, $\varphi_2(x'_i) = 0$. Thus, the elements $\varphi \in \var_{\F_2}(\seed)$ with $\varphi(x_i) = 0$ are in bijection with $\var_{\F_2}(\seed^{-N(i)}) \sqcup \var_{\F_2}(\seed^{-N(i)})$. This finishes the proof. 
\end{proof}

\begin{remark}\label{rmk:small-cases}Note that if $\seed$ is the empty seed, then $A_{\F_2}(\seed) = \F_2$, so $\#\var_{\F_2}(\seed) = 1$. If $\seed$ has a unique mutable vertex, then $\var_{\F_2}(\seed) \cong \{(x,x') \in \F_2^{2} \mid xx' = 0\} = \F_2^{2} \setminus \{(1,1)\}$, so $\#\var_{\F_2}(\seed) = 3$. 
From here, one can  recursively determine the number of elements in $\var_{\F_2}(\seed)$ for any acyclic seed $\seed$.
\end{remark}

\section{Type \texorpdfstring{$\A$}{A}}\label{sec:type-A}

For the remainder of the paper, we focus on the case of type $\A$ cluster varieties. 

\subsection{A geometric model} It will be convenient to have a geometric model for type $\A$ cluster varieties. Over $\F_2$, we can focus on this geometric model without loss of generality by Remark \ref{rmk:frozens-dont-matter-f2}. We consider the projective space $\F\PP^1 = \{[a:b] \mid (a,b) \in \F^2, (a,b) \neq (0,0)\}$, and the following elements of $\F\PP^1$:
\[
\zero := [1:0], \qquad \infty := [0:1].
\]

And consider the set $X_{\F}(m)$ consisting of $(m+1)$-tuples $(y_0, \dots y_{m+1}) \in (\F\PP^1)^{m+1}$ satisfying the following conditions:
\[
 y_0 = \zero, y_m = \infty, y_i \neq y_{i+1} \; \text{for all} \; i = 0, \dots, m-1
\]

Note that $X_\F(m)$ is an affine algebraic variety: given an element $(y_0, \dots, y_m) \in X(m)$ there exists a unique collection $a_i, b_i \in \F$, $i = 0, \dots, m$ such that
\begin{enumerate}
\item[(1)] $(a_0, b_0) = (1,0), (a_m, b_m) = (0,1)$. 
\item[(2)] $y_i = [a_i:b_i]$ for every $i = 0, \dots, m$.
\item[(3)] $\det\begin{pmatrix} a_i & a_{i+1} \\ b_i & b_{i+1}\end{pmatrix} = 1$ for $i = 0, \dots, m-2$.
\item[(4)] $\det\begin{pmatrix} a_{m-1} & a_m \\ b_{m-1} & b_m\end{pmatrix} = \det\begin{pmatrix} a_{m-1} & 0 \\ b_{m-1} & 1 \end{pmatrix} = a_{m-1} \neq 0$.
\end{enumerate}

The equations in (3), together with $a_{m-1} \neq 0$, give the structure of an affine algebraic variety over $\F$ to $X_\F(m)$. 

We consider a regular polygon $P_{m+1}$ with vertices $0, 1, \dots, m$, with a distinguished side joining $m$ and $0$, that we always color in purple. We can write elements of $X_\F(m)$ as clockwise labelings of the vertices $P_{m+1}$, so that the element $(y_0, \dots, y_m)$ represents the labeling that labels the vertex $i$ by $y_i$. We can then give a cluster structure on $\F[X_\F(m)]$ as follows.
\begin{itemize}
\item Seeds are in bijection with triangulations of the polygon $P_{m+1}$.
\item For a triangulation $T$ of $P_{m+1}$, the mutable part of the cluster $\x$ is given by $\left\{\det\begin{pmatrix} a_i & a_j \\ b_i & b_j\end{pmatrix}\right\}$, where $ij$ runs over the diagonals in $T$. The unique frozen variable corresponds to the side $(m-1,m)$, and it is $\det\begin{pmatrix} a_{m-1} & 0 \\ b_{m-1} & 1\end{pmatrix} = a_{m-1}$. 
\item Given a triangulation $T$ of $P_{m+1}$, a quiver is obtained by placing a mutable vertex in each internal diagonal of $T$, a frozen variable on the side $(m-1, m)$, and arrows that create a counterclockwise $3$-cycle in each triangle.
\item Given a triangulation $T$ and an internal diagonal of it, it belongs to exactly two triangles that together form a quadrilateral. Mutation at the vertex associated to this diagonal corresponds to replacing it by the other diagonal in the quadrilateral, see Figure \ref{fig:mutation}. 
\end{itemize}

By choosing a \emph{fan triangulation} (i.e., a triangulation all whose diagonals are incident to one vertex) we can see that $\F[X_\F(m)]$ is a cluster algebra of type $\A_{m-2}$. 

Using the description of the cluster variables, it is easy to verify when an element $(y_0, \dots, y_m)$ belongs to a given cluster torus. 

\begin{lemma}\label{lem:cluster-torus-triangulation}
    Let $T$ be a triangulation of $P_{m+1}$, $\x$ its associated cluster, and $\T_\x$ the corresponding cluster torus. Then $(y_0, \dots, y_m) \in \T_\x$ if and only if $y_i \neq y_j$ for each diagonal $ij \in T$. 
\end{lemma}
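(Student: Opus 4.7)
The plan is to unwind the definitions: by construction of the cluster torus, $(y_0,\dots,y_m)\in\T_\x$ if and only if every element of the cluster $\x$ is nonzero when evaluated at the point. The elements of $\x$ come in two flavors in the geometric model: the cluster variables indexed by internal diagonals $ij\in T$, which are the $2\times 2$ determinants $\Delta_{ij}=a_ib_j-a_jb_i$, and the frozen variable on the distinguished purple side, which thanks to the normalization $(a_0,b_0)=(1,0)$, $(a_m,b_m)=(0,1)$ is automatically a unit and can be discarded from the analysis. So the task reduces to characterizing the vanishing of the $\Delta_{ij}$ for $ij$ a diagonal of $T$.

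The key observation is purely projective: since $y_i=[a_i:b_i]$ is a point of $\F\PP^1$, the vector $(a_i,b_i)$ is nonzero for each $i$; hence the matrix $\begin{pmatrix}a_i&a_j\\b_i&b_j\end{pmatrix}$ has rank at most $1$ exactly when its columns are proportional, i.e.\ exactly when $[a_i:b_i]=[a_j:b_j]$, which is to say $y_i=y_j$. Thus $\Delta_{ij}=0\iff y_i=y_j$. Combining: the point $(y_0,\dots,y_m)$ lies in $\T_\x$ iff $\Delta_{ij}\neq 0$ for every diagonal $ij\in T$ iff $y_i\neq y_j$ for every diagonal $ij\in T$. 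There is no real obstacle here; the lemma is essentially a direct translation of the cluster-torus definition through the explicit determinantal description of the cluster variables, once one records that the frozen variable is a nonzero constant in this model.
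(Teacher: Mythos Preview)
Your proof is correct and matches the paper's approach: the paper does not give an explicit proof of this lemma, merely remarking that ``using the description of the cluster variables, it is easy to verify'' the statement, and your argument is precisely that verification---each cluster variable attached to a diagonal $ij$ is the $2\times 2$ determinant $\Delta_{ij}$, which vanishes exactly when the projective points $y_i$ and $y_j$ coincide, while the frozen variable is automatically a unit by the normalization. No further comment is needed.
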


\begin{figure}[ht!]
\begin{center}
\begin{tikzpicture}
   \newdimen\R
\R=1cm
   \draw (0:\R)
   \foreach \x in {60,120,...,360} {  -- (\x:\R) };
   \draw (60:\R)--(180:\R)--(360:\R)--(240:\R);
   \draw[color=purple] (240:\R)--(300:\R);
    \node at (240:1.2cm) {\tiny{$y_0$}};
    \node at (180:1.2cm) {\tiny{$y_1$}};
    \node at (120:1.2cm) {\tiny{$y_2$}};
    \node at (60:1.2cm) {\tiny{$y_3$}};
    \node at (360:1.2cm) {\tiny{$y_4$}};
    \node at (300:1.2cm) {\tiny{$y_5$}};

    \node at (0,0) {\color{red}\tiny{$\bullet$}};
    \node at (-0.2, 0.45) {\tiny{$\bullet$}};
    \node at (0.2, -0.45) {\tiny{$\bullet$}};
    \node at (0.75, -0.45) {\tiny{$\square$}};

    \draw[->] (0.65, -0.45) -- (0.3, -0.45);
    \draw[->] (0.2, -0.4) -- (0.05, -0.05);
    \draw[<-] (0, 0.05) -- (-0.15, 0.4);

\begin{scope}[shift={(4,0)}]
   \draw (0:\R)
   \foreach \x in {60,120,...,360} {  -- (\x:\R) };
   \draw (60:\R)--(180:\R);
   \draw (60:\R)--(240:\R) -- (360:\R);
   \draw[color=purple] (240:\R)--(300:\R);
    \node at (240:1.2cm) {\tiny{$y_0$}};
    \node at (180:1.2cm) {\tiny{$y_1$}};
    \node at (120:1.2cm) {\tiny{$y_2$}};
    \node at (60:1.2cm) {\tiny{$y_3$}};
    \node at (360:1.2cm) {\tiny{$y_4$}};
    \node at (300:1.2cm) {\tiny{$y_5$}};

    \node at (0,0) {\color{red}\tiny{$\bullet$}};
    \node at (-0.2, 0.45) {\tiny{$\bullet$}};
    \node at (0.2, -0.45) {\tiny{$\bullet$}};
    \node at (0.75, -0.45) {\tiny{$\square$}};

     \draw[->] (0.65, -0.45) -- (0.3, -0.45);
    \draw[<-] (0.2, -0.4) -- (0.05, -0.05);
    \draw[->] (0, 0.05) -- (-0.15, 0.4);
\end{scope}
\end{tikzpicture}
\end{center}
\caption{Two seeds of the variety $X_\F(5)$, that are related by mutation at the red vertex. Note that the cluster torus associated to the seed on the left-hand side is $\{y_0 \neq y_4\}\cap\{y_4 \neq y_1\}\cap\{y_1 \neq y_3\}$, while the torus associated to the seed on the right-hand side is $\{y_0 \neq y_4\}\cap\{y_0 \neq y_3\}\cap\{y_3 \neq y_1\}$.}
\label{fig:mutation}
\end{figure}
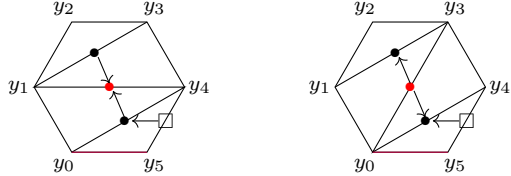

By \cite[Proposition 5.1]{deep-locus}, the union of all cluster tori consists of $X_\F(m)$ except the point $\{(\zero, \infty, \zero, \infty, \dots, \zero, \infty)\}$ when it belongs to $X_\F(m)$. In particular, $X(m)$ has empty deep locus if and only if $m+1$ is odd.

\begin{definition}\label{def:valid-diagonal-point}
    Let $y = (y_0, \dots, y_m) \in X(m)$, and let $ij$ be a diagonal of $P_{m+1}$. We say that a diagonal $ij$ is \emph{invalid} for $y$ if one of the two things happens.
    \begin{enumerate}
    \item $y_i = y_j$, or
    \item $ij$ separates $P_{m+1}$ into two polygons, and in one of these polygons the labels of the vertices take only two values.
    \end{enumerate}
    We say that $ij$ is valid for $y$ if it is not invalid.
\end{definition}

\begin{lemma}\label{lem:invalid-diagonal}
Let $y = (y_0, \dots, y_m) \in X(m)$, and let $ij$ be a diagonal of $P_{m+1}$. The following conditions are equivalent.
\begin{enumerate}
    \item $ij$ is valid for $y$.
    \item There exists a triangulation $T$ such that $ij \in T$, and $y$ belongs to the corresponding cluster torus.
\end{enumerate}
\end{lemma}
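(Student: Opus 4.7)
The plan is to prove the two implications separately. The reverse direction $(2) \Rightarrow (1)$ is short and rests on a pigeonhole argument applied to individual triangles of $T$, while the forward direction $(1) \Rightarrow (2)$ reduces to a combinatorial claim about labeled polygons that I establish by induction on the number of vertices.

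For $(2) \Rightarrow (1)$, assume $T$ is a triangulation with $ij \in T$ and $y \in \T_\x$. By Lemma \ref{lem:cluster-torus-triangulation} every diagonal in $T$ has distinct endpoint labels, so in particular $y_i \neq y_j$. Suppose for contradiction that one of the two sub-polygons cut off by $ij$, call it $P'$, has only two distinct boundary labels $a, b$. Since consecutive vertices of $P'$'s boundary differ (being either consecutive in $P_{m+1}$, or being $i$ and $j$ with $y_i \neq y_j$), the labels of $P'$ strictly alternate between $a$ and $b$. Pick any triangle in the restriction $T|_{P'}$; by pigeonhole two of its three vertices share a label, and the edge joining them cannot be a boundary edge of $P'$ (whose endpoints have distinct labels), so it must be a diagonal of $T$ with equal endpoint labels, contradicting Lemma \ref{lem:cluster-torus-triangulation}.

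For $(1) \Rightarrow (2)$, the key intermediate claim is that any polygon $P$ with $n \geq 3$ vertices whose boundary labeling has consecutive entries distinct and at least three distinct values admits a triangulation whose diagonals all have distinct endpoint labels. Granting this, validity of $ij$ gives $y_i \neq y_j$ and that each of the two sub-polygons $P_1, P_2$ satisfies the hypotheses of the claim (a triangular sub-polygon automatically has three distinct labels from $y_i \neq y_j$ and the boundary alternation of $P_{m+1}$). Triangulating $P_1$ and $P_2$ by the claim and adjoining $ij$ produces the required triangulation $T$.

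The claim is proved by strong induction on $n$; the base $n = 3$ is vacuous. For $n \geq 4$ the strategy is to find a vertex $v$ with $y_{v-1} \neq y_{v+1}$, so the ear diagonal $(v-1)(v+1)$ is good, such that $P \setminus \{v\}$ still has at least three distinct labels, and then apply induction. The boundary-alternation condition carries over automatically to $P \setminus \{v\}$, so what can fail is the three-labels condition: removing $v$ drops the label count only when $v$ is the unique carrier of its label. If $P$ has at least four distinct labels then any ear suffices, and an ear must exist because otherwise $y_{v-1} = y_{v+1}$ for all $v$ would force the cyclic label sequence to be $2$-periodic and hence use only two values. The main obstacle is when $P$ has exactly three distinct labels: here I must locate a \emph{non-unique} ear. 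Suppose none exists, so every non-unique vertex $v$ satisfies $y_{v-1} = y_{v+1}$. Take an occurrence $v$ of a non-unique label $a$; then $y_{v-1} = y_{v+1} = b$ for some $b \neq a$. But $b$ now appears at two positions and so is non-unique, making $v \pm 1$ non-ears and forcing $y_{v \pm 2} = a$. Iterating this propagation around the full cycle shows every label equals $a$ or $b$, contradicting the three-labels assumption. Hence a non-unique good ear always exists and the induction closes.
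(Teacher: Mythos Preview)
Your proof is correct. The overall strategy—splitting $P_{m+1}$ along $ij$ and reducing each direction to a statement about the two sub-polygons—matches the paper's, but the paper outsources the key step: for $(1)\Rightarrow(2)$ it invokes \cite[Proposition~5.1]{deep-locus} (which identifies the deep locus as the single alternating point, hence any sub-polygon with at least three label values admits a good triangulation), and it dismisses $(2)\Rightarrow(1)$ with ``the converse is proved similarly''. You instead give a self-contained argument in both directions: the pigeonhole on triangles for $(2)\Rightarrow(1)$, and the ear-removal induction for $(1)\Rightarrow(2)$. Your induction is essentially an elementary proof of the cited proposition, so the difference is expository rather than a genuinely different route; what it buys is independence from the external reference.
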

\begin{proof}
If $ij$ is valid for $y$ then $y_i \neq y_j$, and in each sub-polygon determined by $ij$ the labels of the vertices take at least three values. By \cite[Proposition 5.1]{deep-locus}, the deep locus of the cluster variety associated to the sub-polygon consists of labelings of the vertices of the sub-polygon whose labels only take two values. So in each sub-polygon we do not have a deep point, and thus we can find triangulations of each of these polygons so that the conditions of Lemma \ref{lem:cluster-torus-triangulation} are satisfied. The union of these triangulations and the diagonal $ij$ forms a triangulation that satisfies (2). The converse is proved similarly.
\end{proof}

\subsection{Cluster tori over \texorpdfstring{$\F_2$}{F2}} Note that $\#\F_q\PP^1 = q+1$. Following graph-theoretic terminology, we will refer to elements of $\F_q\PP^1$ as \emph{colors}. Recall that a \emph{proper} coloring of a graph $G$ on a set $C$ is a function $c: \mathrm{vertices}(G) \to C$ such that $c(v_1) \neq c(v_2)$ if $v_1v_2$ is an edge of $G$. Thus, interpreting a triangulation $T$ of the $(m+1)$-gon $P_{m+1}$ as a graph on the vertices of the polygon, we obtain that the elements of the corresponding cluster tori correspond precisely to the proper colorings of $T$ with $q+1$ colors, subject to the condition that $c(v_0) = \zero, c(v_{m}) = \infty$.

In the case $q = 2$ an easy induction shows that, subject to the condition $c(v_0) = \zero, c(v_{m}) = \infty$, a triangulation $T$ of $P_{m+1}$ admits a \emph{unique} proper coloring by elements of $\F_2\PP^1$. We thus obtain a map
\begin{equation}\label{eq:seeds-to-points-triangulations}
c: \mathsf{triangulations}(P_{m+1}) \to X_{\F_2}(m),
\end{equation}
that is a special case of the map \eqref{eq:seeds-to-points-refined} from the introduction. 

\begin{definition}\label{def:valid-diagonal-triangulation}
    Let $T$ be a triangulation of a $P_{m+1}$-gon, and let $ij$ be a diagonal of $P_{m+1}$. We say that $ij$ is valid for $T$ if it is valid for $c(T)$, cf. Definition \ref{def:valid-diagonal-point}. Else we say that $ij$ is invalid for $T$.
\end{definition}

Obviously, every diagonal of $T$ is valid for $T$, but there may be diagonals not in $T$ that are valid for $T$, see Figure \ref{fig:triangulations-hexagon}.

\subsection{The case of \texorpdfstring{$\A_3$}{A3}} For $m = 3,4$ (that is, the cases of cluster types $\A_1$ and $\A_2$) the map \eqref{eq:seeds-to-points-triangulations} is easily seen to be injective. Let us explicitly obtain the map \eqref{eq:seeds-to-points-triangulations} in the case of $\A_3$, that is, $X_{\F_2}(5)$. We record the $14$ triangulations of the hexagon with their corresponding images in $X_{\F_2}(5)$ in Figure \ref{fig:triangulations-hexagon}.

\begin{figure}[ht]
\begin{tikzpicture}
   \newdimen\R
\R=1cm
   \draw (0:\R)
   \foreach \x in {60,120,...,360} {  -- (\x:\R) };
   \draw[color=purple] (300:\R)--(240:\R);
   \draw (240:\R)--(360:\R);
   \draw (240:\R)--(60:\R);
   \draw (240:\R) -- (120:\R);
   \node at (240:1.2cm) {\tiny{$\zero$}};
   \node at (300:1.2cm) {\tiny{$\infty$}};
   \node at (180:1.2cm) {\tiny{$\infty$}};
   \node at (120:1.2cm) {\tiny{$\one$}};
   \node at (60:1.2cm) {\tiny{$\infty$}};
   \node at (360:1.2cm) {\tiny{$\one$}};

\begin{scope}[shift={(3,0)}]
   \draw (0:\R)
   \foreach \x in {60,120,...,360} {  -- (\x:\R) };
    \draw[color=purple] (300:\R)--(240:\R);
    \draw (180:\R) -- (300:\R);
    \draw (180:\R) -- (360:\R);
    \draw (180:\R) -- (60:\R);
     \node at (240:1.2cm) {\tiny{$\zero$}};
   \node at (300:1.2cm) {\tiny{$\infty$}};
   \node at (180:1.2cm) {\tiny{$\one$}};
   \node at (120:1.2cm) {\tiny{$\zero$}};
   \node at (60:1.2cm) {\tiny{$\infty$}};
   \node at (360:1.2cm) {\tiny{$\zero$}};
\end{scope}

\begin{scope}[shift={(6,0)}]
   \draw (0:\R)
   \foreach \x in {60,120,...,360} {  -- (\x:\R) };
    \draw[color=purple] (300:\R)--(240:\R);
    \draw (120:\R) -- (360:\R);
    \draw (120:\R)--(300:\R);
    \draw (120:\R) -- (240:\R);
     \node at (240:1.2cm) {\tiny{$\zero$}};
   \node at (300:1.2cm) {\tiny{$\infty$}};
     \node at (180:1.2cm) {\tiny{$\infty$}};
   \node at (120:1.2cm) {\tiny{$\one$}};
   \node at (60:1.2cm) {\tiny{$\infty$}};
   \node at (360:1.2cm) {\tiny{$\zero$}};
\end{scope}

\begin{scope}[shift={(9,0)}]
   \draw (0:\R)
   \foreach \x in {60,120,...,360} {  -- (\x:\R) };
    \draw[color=purple] (300:\R)--(240:\R);
    \draw (60:\R) -- (300:\R);
    \draw (60:\R) -- (240:\R);
    \draw(60:\R) -- (180:\R);
     \node at (240:1.2cm) {\tiny{$\zero$}};
   \node at (300:1.2cm) {\tiny{$\infty$}};
     \node at (180:1.2cm) {\tiny{$\infty$}};
   \node at (120:1.2cm) {\tiny{$\zero$}};
   \node at (60:1.2cm) {\tiny{$\one$}};
   \node at (360:1.2cm) {\tiny{$\zero$}};
\end{scope}

\begin{scope}[shift={(12,0)}]
   \draw (0:\R)
   \foreach \x in {60,120,...,360} {  -- (\x:\R) };
    \draw[color=purple] (300:\R)--(240:\R);
    \draw (360:\R) -- (240:\R);
    \draw (360:\R) -- (180:\R);
    \draw (360:\R) -- (120:\R);
     \node at (240:1.2cm) {\tiny{$\zero$}};
   \node at (300:1.2cm) {\tiny{$\infty$}};
     \node at (180:1.2cm) {\tiny{$\infty$}};
   \node at (120:1.2cm) {\tiny{$\zero$}};
   \node at (60:1.2cm) {\tiny{$\infty$}};
   \node at (360:1.2cm) {\tiny{$\one$}};
\end{scope}

\begin{scope}[shift={(0,-2.5)}]
   \draw (0:\R)
   \foreach \x in {60,120,...,360} {  -- (\x:\R) };
    \draw[color=purple] (300:\R)--(240:\R);
    \draw (300:\R) -- (60:\R);
    \draw (300:\R) -- (120:\R);
    \draw (300:\R) -- (180:\R);
     \node at (240:1.2cm) {\tiny{$\zero$}};
   \node at (300:1.2cm) {\tiny{$\infty$}};
     \node at (180:1.2cm) {\tiny{$\one$}};
   \node at (120:1.2cm) {\tiny{$\zero$}};
   \node at (60:1.2cm) {\tiny{$\one$}};
   \node at (360:1.2cm) {\tiny{$\zero$}};
\end{scope}

\begin{scope}[shift={(3,-2.5)}]
   \draw (0:\R)
   \foreach \x in {60,120,...,360} {  -- (\x:\R) };
    \draw[color=purple] (300:\R)--(240:\R);
    \draw (60:\R)--(300:\R)--(120:\R)--(240:\R);
     \node at (240:1.2cm) {\tiny{$\zero$}};
   \node at (300:1.2cm) {\tiny{$\infty$}};
     \node at (180:1.2cm) {\tiny{$\infty$}};
   \node at (120:1.2cm) {\tiny{$\one$}};
   \node at (60:1.2cm) {\tiny{$\zero$}};
   \node at (360:1.2cm) {\tiny{$\one$}};

\end{scope}

\begin{scope}[shift={(6,-2.5)}]
   \draw (0:\R)
   \foreach \x in {60,120,...,360} {  -- (\x:\R) };
    \draw[color=purple] (300:\R)--(240:\R);
      \draw (60:\R)--(180:\R)--(360:\R)--(240:\R);
     \node at (240:1.2cm) {\tiny{$\zero$}};
   \node at (300:1.2cm) {\tiny{$\infty$}};
        \node at (180:1.2cm) {\tiny{$\infty$}};
   \node at (120:1.2cm) {\tiny{$\one$}};
   \node at (60:1.2cm) {\tiny{$\zero$}};
   \node at (360:1.2cm) {\tiny{$\one$}};
\end{scope}

\draw[color=red] (1.5, -1.3) -- (7.5, -1.3) -- (7.5, -3.8) -- (1.5, -3.8) -- cycle;

\begin{scope}[shift={(9,-2.5)}]
   \draw (0:\R)
   \foreach \x in {60,120,...,360} {  -- (\x:\R) };
    \draw[color=purple] (300:\R)--(240:\R);
     \draw (360:\R)--(120:\R)--(300:\R)--(180:\R);
     \node at (240:1.2cm) {\tiny{$\zero$}};
   \node at (300:1.2cm) {\tiny{$\infty$}};
    \node at (180:1.2cm) {\tiny{$\one$}};
   \node at (120:1.2cm) {\tiny{$\zero$}};
   \node at (60:1.2cm) {\tiny{$\infty$}};
   \node at (360:1.2cm) {\tiny{$\one$}};
\end{scope}

\draw[color=red] (7.6, -1.3) -- (13.6, -1.3) -- (13.6, -3.8) -- (7.6, -3.8) -- cycle;

\begin{scope}[shift={(12,-2.5)}]
   \draw (0:\R)
   \foreach \x in {60,120,...,360} {  -- (\x:\R) };
    \draw[color=purple] (300:\R)--(240:\R);
    \draw (360:\R) -- (240:\R) -- (60:\R) -- (180:\R);
     \node at (240:1.2cm) {\tiny{$\zero$}};
   \node at (300:1.2cm) {\tiny{$\infty$}};
     \node at (180:1.2cm) {\tiny{$\one$}};
   \node at (120:1.2cm) {\tiny{$\zero$}};
   \node at (60:1.2cm) {\tiny{$\infty$}};
   \node at (360:1.2cm) {\tiny{$\one$}};
\end{scope}

\begin{scope}[shift={(1,-5)}]
   \draw (0:\R)
   \foreach \x in {60,120,...,360} {  -- (\x:\R) };
    \draw[color=purple] (300:\R)--(240:\R);
     \draw (120:\R)--(240:\R)--(60:\R)--(300:\R);
     \node at (240:1.2cm) {\tiny{$\zero$}};
   \node at (300:1.2cm) {\tiny{$\infty$}};
    \node at (180:1.2cm) {\tiny{$\one$}};
   \node at (120:1.2cm) {\tiny{$\infty$}};
   \node at (60:1.2cm) {\tiny{$\one$}};
   \node at (360:1.2cm) {\tiny{$\zero$}};
\end{scope}

\begin{scope}[shift={(4,-5)}]
   \draw (0:\R)
   \foreach \x in {60,120,...,360} {  -- (\x:\R) };
    \draw[color=purple] (300:\R)--(240:\R);
    \draw (120:\R) -- (360:\R) -- (180:\R) -- (300:\R);
     \node at (240:1.2cm) {\tiny{$\zero$}};
   \node at (300:1.2cm) {\tiny{$\infty$}};
       \node at (180:1.2cm) {\tiny{$\one$}};
   \node at (120:1.2cm) {\tiny{$\infty$}};
   \node at (60:1.2cm) {\tiny{$\one$}};
   \node at (360:1.2cm) {\tiny{$\zero$}};
\end{scope}

\draw[color=red] (-0.5, -3.8) -- (5.5, -3.8) -- (5.5, -6.3) -- (-0.5, -6.3) -- cycle;

\begin{scope}[shift={(7,-5)}]
   \draw (0:\R)
   \foreach \x in {60,120,...,360} {  -- (\x:\R) };
    \draw[color=purple] (300:\R)--(240:\R);
     \draw (60:\R)--(180:\R)--(300:\R)--(60:\R);
     \node at (240:1.2cm) {\tiny{$\zero$}};
   \node at (300:1.2cm) {\tiny{$\infty$}};
     \node at (180:1.2cm) {\tiny{$\one$}};
   \node at (120:1.2cm) {\tiny{$\infty$}};
   \node at (60:1.2cm) {\tiny{$\zero$}};
   \node at (360:1.2cm) {\tiny{$\one$}};
  
\end{scope}

\begin{scope}[shift={(10,-5)}]
   \draw (0:\R)
   \foreach \x in {60,120,...,360} {  -- (\x:\R) };
    \draw[color=purple] (300:\R)--(240:\R);
     \node at (240:1.2cm) {\tiny{$\zero$}};
   \node at (300:1.2cm) {\tiny{$\infty$}};
        \node at (180:1.2cm) {\tiny{$\one$}};
   \node at (120:1.2cm) {\tiny{$\infty$}};
   \node at (60:1.2cm) {\tiny{$\zero$}};
   \node at (360:1.2cm) {\tiny{$\one$}};
   \draw (120:\R)--(240:\R)--(360:\R)--(120:\R);
\end{scope}
\draw[color=red] (5.6, -3.8) -- (11.6, -3.8) -- (11.6, -6.3) -- (5.6, -6.3) -- cycle;
\end{tikzpicture}
\caption{All $14$ triangulations of the hexagon together with their images in $X_{\F_2}(5)$ under the map \eqref{eq:seeds-to-points-triangulations}. The red boxes indicate the triangulations that map to the same point in $X_{\F_2}(5)$.}
\label{fig:triangulations-hexagon}
\end{figure}
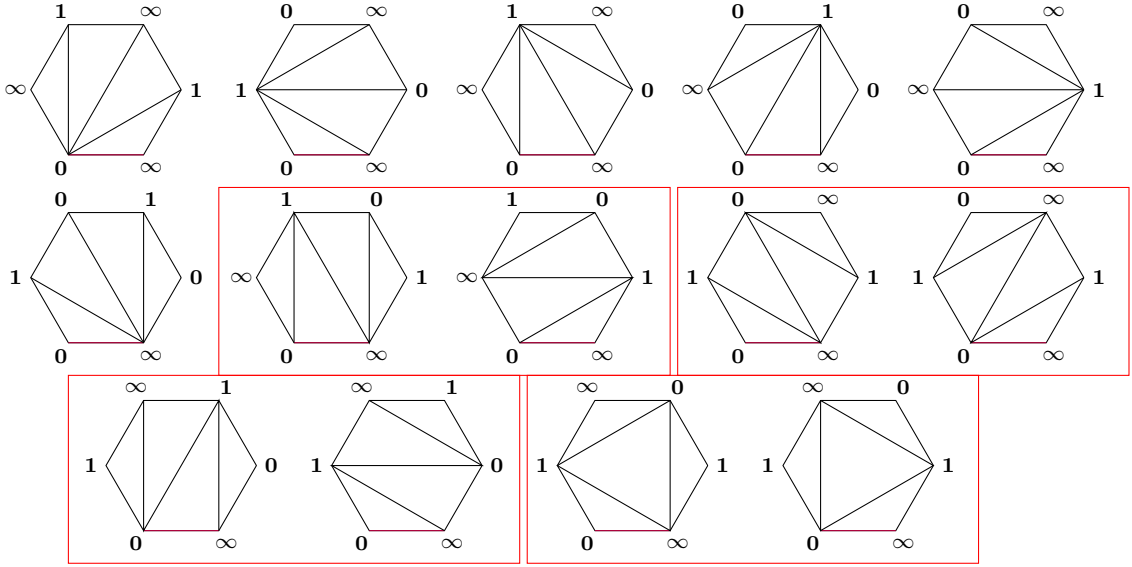

\subsection{Hexagonal moves}
Motivated by Figure \ref{fig:triangulations-hexagon}, we define the following moves on triangulations. 

\begin{definition}
Let $T$ be a triangulation of a $P_{m+1}$-gon. A hexagonal move on $T$ is a local move on $T$ as indicated by Figure \ref{fig:hexagonal-moves-text}.
\end{definition}

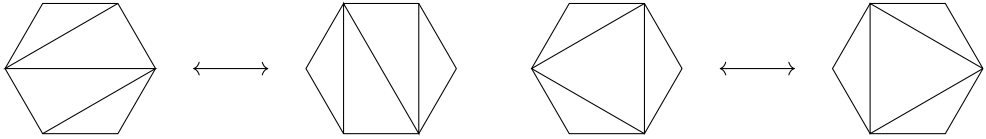
\begin{figure}
\begin{center}
\begin{tikzpicture}
   \newdimen\R
\R=1cm
   \draw (0:\R)
   \foreach \x in {60,120,...,360} {  -- (\x:\R) };
   \draw (60:\R)--(180:\R)--(360:\R)--(240:\R);
\draw[<->] (1.5,0) -- (2.5, 0);
\begin{scope}[shift={(4,0)}]
   \draw (0:\R)
   \foreach \x in {60,120,...,360} {  -- (\x:\R) };
   \draw (60:\R)--(300:\R)--(120:\R)--(240:\R);
\end{scope}
\begin{scope}[shift={(7,0)}]
   \draw (0:\R)
   \foreach \x in {60,120,...,360} {  -- (\x:\R) };
   \draw (60:\R)--(180:\R)--(300:\R)--(60:\R);
\end{scope}
\draw[<->] (8.5,0) -- (9.5,0);
\begin{scope}[shift={(11,0)}]
   \draw (0:\R)
   \foreach \x in {60,120,...,360} {  -- (\x:\R) };
   \draw (360:\R)--(120:\R)--(240:\R)--(360:\R);
\end{scope}
\end{tikzpicture}
\end{center}
\caption{Hexagonal moves. We refer to the move on the left-hand side as a \emph{zig-zag move}. It exchanges the zig-zag triangulation of a hexagon joining two antipodal points, with the other zig-zag joining the same antipodal points. We refer to the move on the right-hand side as a \emph{inscribed triangle} move.}
\label{fig:hexagonal-moves-text}
\end{figure}

For examples of hexagonal moves, see Figure \ref{fig:hexagonal-moves}. 

\begin{figure}
\begin{center}
\begin{tikzpicture}
   \newdimen\R
\R=1cm
   \draw (0:\R)
   \foreach \x in {40,80,120,...,360} {  -- (\x:\R) };
     \filldraw[color=lightgray] (360:\R)--(40:\R)--(80:\R)--(120:\R)--(160:\R)--(320:\R)--(360:\R);
     \draw (320:\R)--(360:\R)--(40:\R)--(80:\R)--(120:\R)--(160:\R);
   \draw (240:\R)--(160:\R)--(320:\R) -- (240:\R);
   \draw(120:\R)--(320:\R)--(80:\R)--(360:\R);

 \draw[<->] (1.5,0) -- (2.5, 0);
   \begin{scope}[shift={(4,0)}]
   \draw (0:\R)
  \foreach \x in {40,80,120,...,360} {  -- (\x:\R) };
    \filldraw[color=lightgray] (360:\R)--(40:\R)--(80:\R)--(120:\R)--(160:\R)--(320:\R)--(360:\R);
    \draw (320:\R)--(360:\R)--(40:\R)--(80:\R)--(120:\R)--(160:\R);
   \draw (240:\R)--(160:\R)--(320:\R) -- (240:\R);
   \draw(120:\R)--(40:\R)--(160:\R)--(360:\R);
\end{scope}

\begin{scope}[shift={(-4,0)}]
   \draw (0:\R)
   \foreach \x in {40,80,120,...,360} {  -- (\x:\R) };
   \filldraw[color=lightgray] (320:\R)--(280:\R)--(240:\R)--(200:\R)--(160:\R) -- (120:\R) --(320:\R);
   \draw (320:\R)--(280:\R)--(240:\R)--(200:\R)--(160:\R)--(120:\R);
   \draw (240:\R)--(160:\R)--(320:\R) -- (240:\R);
   \draw(120:\R)--(320:\R)--(80:\R)--(360:\R);
\end{scope}

\draw[<->] (-6.5,0) -- (-5.5, 0);

\begin{scope}[shift={(-8,0)}]
   \draw (0:\R)
   \foreach \x in {40,80,120,...,360} {  -- (\x:\R) };
   \filldraw[color=lightgray] (320:\R)--(280:\R)--(240:\R)--(200:\R)--(160:\R) -- (120:\R) --(320:\R);
   \draw (320:\R)--(280:\R)--(240:\R)--(200:\R)--(160:\R)--(120:\R);
\draw(200:\R)--(120:\R)--(280:\R)--(200:\R);
   \draw(120:\R)--(320:\R)--(80:\R)--(360:\R);
\end{scope}
\end{tikzpicture}
\end{center}
\caption{Hexagonal moves on triangulations of a $9$-gon. The hexagon where the move is happening is shaded.}
\label{fig:hexagonal-moves}
\end{figure}
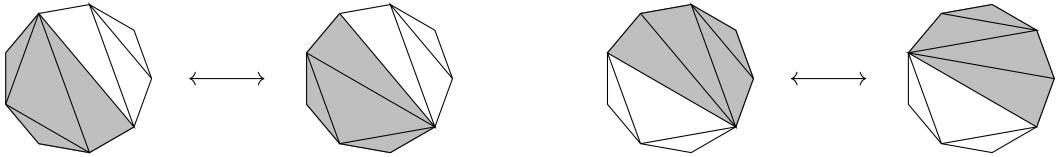

\begin{theorem}\label{thm:main-text}
    Let $T$ and $T'$ be triangulations of the polygon $P_{m+1}$. Then, $c(T) = c(T')$ if and only if $T$ and $T'$ are related through a sequence of hexagonal moves, where $c$ is the map from \eqref{eq:seeds-to-points-triangulations}.
\end{theorem}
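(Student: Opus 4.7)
The plan is to prove the two directions separately. The forward direction is a local verification; the reverse direction proceeds by strong induction on $m$, with one crucial case requiring essential use of the three-color structure of $\F_2\PP^1$.

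For the \emph{forward direction}, I would verify by a direct local computation that each of the two hexagonal moves preserves the coloring. Since the triangulations on either side of a hexagonal move agree outside a hexagonal sub-region, it suffices to check within the hexagon that any proper $3$-coloring of one of its triangulations is also a proper $3$-coloring of the other. For the zig-zag move, the four triangles of each zig-zag force every triangle's three vertices to carry the three distinct colors of $\F_2\PP^1$; propagating these constraints determines the coloring of all six hexagon vertices from the colors of any two adjacent ones, and one verifies that both zig-zags yield identical patterns. For the inscribed-triangle move, each ear forces opposite hexagon vertices to share a color, so the coloring constraints of both inscribed triangulations reduce to the same condition: the three pairs of antipodal hexagon vertices are monochromatic with pairwise distinct colors.

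For the \emph{reverse direction}, I would proceed by strong induction on $m$. The base cases $m \leq 5$ are handled by direct inspection: for $m = 3, 4$ the map $c$ is injective, and for $m = 5$ the size-$2$ fibers visible in Figure \ref{fig:triangulations-hexagon} are each realized by a single hexagonal move. For the inductive step, suppose $c(T) = c(T')$. If $T$ and $T'$ share a diagonal $d$, then $d$ splits $P_{m+1}$ into two sub-polygons; the restrictions of $T$ and $T'$ to each sub-polygon lie in the corresponding fibers (for the induced boundary colorings), and the inductive hypothesis, suitably strengthened to allow arbitrary fixed endpoint colors, connects these restrictions by hexagonal moves that extend to global moves on $P_{m+1}$.

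The main obstacle is the case where $T$ and $T'$ share no diagonal. A key structural observation guides the analysis: in $\F_2\PP^1$, no proper $3$-coloring of a $4$-cycle in a quadrilateral can leave \emph{both} diagonals valid, since four vertices cannot be pairwise distinguished using only three colors. Hence a single cluster flip can never preserve the coloring, which is precisely why the coarser hexagonal moves are necessary. To produce a useful hexagonal move on $T$, I would select a diagonal $d' \in T'$ and examine the diagonals of $T$ crossed by $d'$; these cut out a triangulated sub-polygon through which $d'$ passes. Using the coloring constraints on this sub-polygon, in particular the structure of maximal bichromatic sub-arcs of $y$ and the fact that $y$ is not the alternating sequence, I would locate a hexagonal sub-region of $T$ whose triangulation is either a zig-zag or an inscribed triangle and whose associated hexagonal move strictly increases the number of diagonals shared with $T'$. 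Guaranteeing the existence of such a hexagonal sub-region in general is the technical heart of the proof, and is where I expect to do the bulk of the work.
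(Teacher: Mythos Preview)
Your forward direction and the shared-diagonal sub-case of the induction are exactly what the paper does. The divergence is entirely in the no-shared-diagonal case, and here your plan is missing the idea that actually makes the argument close.

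You propose to locate a hexagonal sub-region of $T$ and perform a \emph{single} hexagonal move that creates a shared diagonal with $T'$. The paper does not do this, and it is not clear such a move always exists. Instead, the paper picks a diagonal $ij\in T$ so that one side $P_1$ has at least six vertices, and then asks whether some diagonal $kl\in T'$ lies inside $P_1$. The key tool is the notion of a \emph{valid diagonal} for a point (Definition~\ref{def:valid-diagonal-point} and Lemma~\ref{lem:invalid-diagonal}): a diagonal $kl$ is valid for $y$ on a sub-polygon iff there is \emph{some} triangulation of that sub-polygon containing $kl$ with $y$ as a proper coloring. When $kl$ is valid for the restriction $c(T_1)$ on $P_1$, the inductive hypothesis applied to $P_1$ gives a \emph{sequence} of hexagonal moves (not a single one) transforming $T_1$ into a triangulation of $P_1$ containing $kl$; now $T$ and $T'$ share $kl$ and you are back in Case~1. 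When $kl$ is invalid on $P_1$ but valid on the full polygon, a short argument reduces both $T$ and $T'$ to a very constrained ``triangle plus fan'' shape where one explicit hexagonal flip suffices. Finally, if every diagonal of $T'$ crosses $ij$, the paper chooses auxiliary diagonals to cut out a strictly smaller sub-polygon on which a diagonal of $T$ is valid, and again runs the induction there.

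So the missing ingredient in your plan is this validity concept and the realization that the inductive step should produce a \emph{chain} of hexagonal moves on a sub-polygon rather than a single global move. Your ``maximal bichromatic sub-arcs'' observation is headed in the right direction (it is essentially what Definition~\ref{def:valid-diagonal-point}(2) encodes), but without Lemma~\ref{lem:invalid-diagonal} tying validity to the existence of a compatible triangulation, you have no mechanism to invoke the inductive hypothesis on the sub-polygon.
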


\begin{proof}
If $T$ and $T'$ are related by a hexagonal move then our computation in Figure \ref{fig:triangulations-hexagon} implies that $c(T) = c(T')$.

For the converse direction, we proceed by induction on $m$. The case $m = 5$ is Figure \ref{fig:triangulations-hexagon}, so let $m > 5$ and assume the result to be true for $5\leq k < m$. Let $T$ and $T'$ be two triangulations mapping to the same point in $X_{\F_2}(m)$.  We separate in several cases.

\vspace{1em}

   {\bf Case 1. $T$ and $T'$ share a diagonal.} In this case, the shared diagonal divides $P_{m+1}$ into two smaller polygons. These polygons inherit triangulations $T_1, T_2$ and $T'_1,T'_2$ from $T$ and $T'$ respectively. We have that $T_1$ and $T_1'$ share the same coloring, and so do $T_2$ and $T_2'$. Then by induction hypothesis there exist sequences of hexagonal moves that taking $T_1$ to $T_1'$ and $T_2$ to $T_2'$. These two sequences combine to take $T$ to $T'$ which proves the result in this case. 

\vspace{1em}

{\bf Case 2. $T$ and $T'$ do not share diagonals.}  Let $ij$ be a diagonal of $T$ such that one of the two polygons in which this diagonal divides $P_{m+1}$ has at least $6$ vertices, we call this polygon $P_1$. Let $T_1$ be the triangulation of this polygon inherited from $T$. We consider two further sub-cases. 

\vspace{1em}

{\bf Case 2.1. There is a diagonal $kl$ in $T'$  that does not cross the diagonal $ij$.} Note that if $kl$ is not contained in $P_1$, and the complementary polygon to $P_1$ has less than 5 vertices then, since the map \eqref{eq:seeds-to-points-triangulations} is injective for $m < 5$, the diagonal $kl$ is shared by $T$ and $T'$, a contradiction. So we may assume without loss of generality that $kl$ is contained in $P_1$.  Now we consider two sub-cases, depending on whether $kl$ is valid for $T_1$, cf. Definition \ref{def:valid-diagonal-triangulation}. 

\vspace{1em}

{\bf Case 2.1.1. The diagonal $kl$ is a valid diagonal for $T_1$.} Then there exists another triangulation $T_2$ of $P_1$ that contains $kl$ and so that $c(T_1) = c(T_2)$. So by induction, there is a sequence of hexagonal moves that takes $T_1$ into $T_2$, and $kl$ is a diagonal in $T_2$. After this we get a new triangulation $T''$ formed by $T_2$ and the triangulation inherited from $T$ on the complement to the polygon $P_1$. This triangulation $T''$ can be obtained from $T$ by a sequence of hexagonal moves and $T'$, $T''$ share the diagonal $kl$. So  by Case 1 above we can take $T''$ to $T'$ by a sequence of hexagonal moves and the result follows.

\vspace{1em}

{\bf Case 2.1.2. The diagonal $kl$ is not a valid diagonal for $T_1$.} We consider the polygon delimited by $i, j, k, l$. We claim that only two different colors appear in the $y$-values of this polygon. This follows by observing that, while $kl$ is not valid for $T_1$ it is valid for $T$ because $c(T) = c(T')$ and $kl$ is a diagonal of $T'$. So in order for $kl$ to satisfy Definition \ref{def:valid-diagonal-point} we must have that it satisfies (2) on the polygon truncated by $ij$.

        We can assume that both $ij$ and $kl$ leave only one vertex on one of the triangulations in which they divide the polygon. We verify this on Figure \ref{fig:hex-move1}.

        \begin{figure}[ht]
            \begin{center}
                \includegraphics[scale = 1.2]{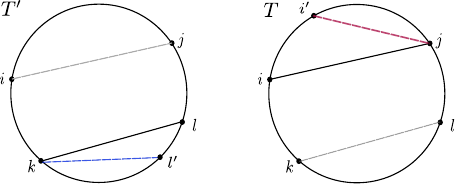}
            \end{center}
            \caption{The diagonal $ij$ belongs to $T$, and the diagonal $kl$ belongs to $T'$. If the diagonal $kl$ does not leave only one vertex at one of its sides, we can pick a diagonal adjacent to $k$ or $l$ as in the left-hand side of the figure. We can then take the diagonal $kl'$ instead of the diagonal $kl$. If there are more than three colors in the polygon delimited by $ijl'k$, then $ij$ is a valid diagonal for the triangulation on the polygon delimited by $kl'$ and including the diagonal $ij$ inherited from $T'$. So by induction we can find a sequence of hexagonal moves on this polygon that take $T'$ to a triangulation containing $ij$. A similar consideration applies if the diagonal $ij$ does not leave only one vertex on one of its sides, as in the right-hand side of this figure.}
            \label{fig:hex-move1}
        \end{figure}

        After this reduction the polygon delimited by $ij$ and $kl$ does not have a proper coloring, so the vertices between these two edges are colored just by two colors in an alternating fashion. More so, these diagonals $ij$ and $kl$  each leave only one vertex at a side. So the triangulations $T$ and $T'$ are formed by a triangle and a fan, as shown in Figure \ref{fig:hex-move-2}.
        
        \begin{figure}[ht]
            \begin{center}
                \includegraphics[scale = 1.2]{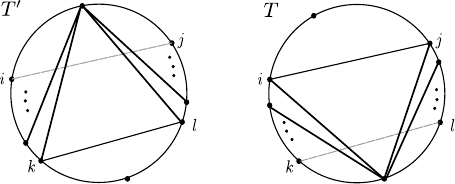}
            \end{center}
            \caption{In Case 2.1.2, after reductions we may assume the triangulations $T$ and $T'$ have shapes as indicated in the figure.}
            \label{fig:hex-move-2}
        \end{figure}

        Note that the vertices left outside of the polygon $ijlk$ must be of the same color, which is precisely the color missing in the polygon $ijlk$. Now, we do an hexagonal flip on $T$ like the one shown in Figure \ref{fig:hex-move-3}.

        \begin{figure}[ht]
        \begin{center}
            \includegraphics[scale = 1.2]{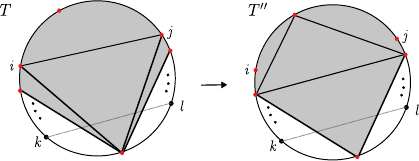}
        \end{center}
        \caption{From Figure \ref{fig:hex-move-2}, after doing an hexagonal flip on the hexagon determined by the red vertices, we have a triangulation sharing a diagonal with $T'$.}
        \label{fig:hex-move-3}
        \end{figure}

        After this flip we get a triangulation $T''$ that shares at least one diagonal with $T'$. From Case 1 we already know that there is a sequence of flips that takes $T''$ to $T'$, and this finishes this case.
        
        \vspace{1em}

        {\bf Case 2.2. Every diagonal in $T'$ crosses the diagonal $ij$.} Fix a diagonal $ab$ of $T$ that does not cross $ij$. By how we took $T'$ the vertices $a$ and $b$ are only adjacent to diagonals of $T'$ that cross the diagonal $ij$. Take diagonals $ar$ and $bs$ of $T'$ adjacent to $a$ and $b$. Note that the polygon $P''$ delimited by the diagonal $ar$ that includes the vertices $a,b,j,s$ excludes $i$ and so it has strictly less vertices than $P$. This is shown in Figure \ref{fig:hex-move-4}.

        \begin{figure}
        \begin{center}
            \includegraphics[scale = 1.2]{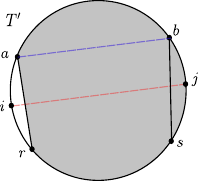}
        \end{center}
        \caption{The colored diagonals $ab$ and $ij$ belong to $T$, while the diagonals $ar$ and $bs$ belong to $T'$. The diagonal $ab$ is valid on the shaded $P''$ since $ab$ is valid for $T$ and $bs$ for $T''$.}
        \label{fig:hex-move-4}
        \end{figure}

       Let $T''$ be the triangulation on $P''$ inherited from $T'$. We verify that $ab$ is valid on $T''$. We follow Figure \ref{fig:hex-move-4}. Since $ab$ is valid for $T$, there are at least three colors on the top of $ab$, so we only need to verify that there are three colors on the bottom of $ab$, excluding the vertices not belonging to $P''$. This follows by noting that $bs$ is a valid diagonal $T'$, so the vertices on the right-hand side of this diagonal are colored by $3$ colors, and these all stay below $ab$. So the vertices above $ab$ have $3$ colors, and so do the vertices below $ab$. This verifies what we wanted.

        As $ab$ is valid on $T''$, by induction there exists a sequence of hexagonal flips that takes $T''$ to a new triangulation that has the diagonal $ab$. As $T''$ is contained on $T'$ this sequence takes $T'$ to a triangulation $T'''$ that shares $ab$ with $T$. From this the result follows.
\end{proof}

\begin{example}
    Consider the following triangulations on a $12$-gon.
    \begin{center}
        \begin{tikzpicture}
               \newdimen\R
\R=1cm
   \draw (0:\R)
   \foreach \x in {30,60,90,120,...,360} {  -- (\x:\R) };
   \draw[color=purple] (270:\R)--(300:\R);
   \node at (270:1.1cm) {\tiny{$\zero$}};
   \node at (300:1.1cm) {\tiny{$\infty$}};
   \node at (330:1.1cm) {\tiny{$\one$}};
   \node at (360:1.1cm) {\tiny{$\zero$}};
   \node at (30:1.1cm) {\tiny{$\infty$}};
   \node at (60:1.1cm) {\tiny{$\one$}};
   \node at (90:1.1cm) {\tiny{$\zero$}};
   \node at (120:1.1cm) {\tiny{$\infty$}};
   \node at (150:1.1cm) {\tiny{$\one$}};
   \node at (180:1.1cm) {\tiny{$\zero$}};
   \node at (210:1.1cm) {\tiny{$\infty$}};
   \node at (240:1.1cm) {\tiny{$\one$}};

\draw(270:\R)--(30:\R)--(330:\R)--(270:\R);
\draw(240:\R)--(30:\R)--(150:\R); \draw(180:\R)--(30:\R); \draw(180:\R)--(240:\R);
\draw(90:\R)--(150:\R); \draw(90:\R)--(30:\R);

\begin{scope}[shift={(4,0)}]

   \draw (0:\R)
   \foreach \x in {30,60,90,120,...,360} {  -- (\x:\R) };
   \draw[color=purple] (270:\R)--(300:\R);
   \node at (270:1.1cm) {\tiny{$\zero$}};
   \node at (300:1.1cm) {\tiny{$\infty$}};
   \node at (330:1.1cm) {\tiny{$\one$}};
   \node at (360:1.1cm) {\tiny{$\zero$}};
   \node at (30:1.1cm) {\tiny{$\infty$}};
   \node at (60:1.1cm) {\tiny{$\one$}};
   \node at (90:1.1cm) {\tiny{$\zero$}};
   \node at (120:1.1cm) {\tiny{$\infty$}};
   \node at (150:1.1cm) {\tiny{$\one$}};
   \node at (180:1.1cm) {\tiny{$\zero$}};
   \node at (210:1.1cm) {\tiny{$\infty$}};
   \node at (240:1.1cm) {\tiny{$\one$}};

\draw (270:\R)--(120:\R)--(240:\R)--(180:\R)--(120:\R);
\draw (270:\R)--(60:\R)--(120:\R);
\draw(60:\R)--(300:\R)--(0:\R)--(60:\R);

\end{scope}
        \end{tikzpicture}
    \end{center}

We have the following sequence of hexagonal moves converting one to the other. In each triangulation, we shade the hexagon where the move is to be performed.

    \begin{center}
        \begin{tikzpicture}
               \newdimen\R
\R=1cm
   \draw (0:\R)
   \foreach \x in {30,60,90,120,...,360} {  -- (\x:\R) };
  % \draw[color=purple] (270:\R)--(300:\R);
   \filldraw[color=lightgray] (240:\R)--(30:\R)--(360:\R)--(330:\R)--(300:\R)--(270:\R)--(240:\R);
   \draw  (240:\R)--(30:\R)--(360:\R)--(330:\R)--(300:\R)--(270:\R)--(240:\R);
\draw(270:\R)--(30:\R)--(330:\R)--(270:\R);
\draw(240:\R)--(30:\R)--(150:\R); \draw(180:\R)--(30:\R); \draw(180:\R)--(240:\R);
\draw(90:\R)--(150:\R); \draw(90:\R)--(30:\R);
 \draw[->] (1.2, 0) -- (1.8, 0);
\begin{scope}[shift={(3,0)}]
\draw (0:\R)
   \foreach \x in {30,60,90,120,...,360} {  -- (\x:\R) };
  % \draw[color=purple] (270:\R)--(300:\R);
   \filldraw[color=lightgray] (180:\R)--(30:\R)--(60:\R)--(90:\R)--(120:\R)--(150:\R)--(180:\R);
   \draw (180:\R)--(30:\R)--(60:\R)--(90:\R)--(120:\R)--(150:\R)--(180:\R);
   \draw(240:\R)--(300:\R)--(360:\R)--(240:\R);
\draw(240:\R)--(30:\R)--(150:\R); \draw(180:\R)--(30:\R); \draw(180:\R)--(240:\R);
\draw(90:\R)--(150:\R); \draw(90:\R)--(30:\R);
\end{scope}

 \draw[->] (4.2, 0) -- (4.8, 0);

\begin{scope}[shift={(6,0)}]
\draw (0:\R)
   \foreach \x in {30,60,90,120,...,360} {  -- (\x:\R) };
  % \draw[color=purple] (270:\R)--(300:\R);
   \filldraw[color=lightgray] (240:\R)--(180:\R)--(120:\R)--(60:\R)--(30:\R)--(360:\R)--(240:\R);
   \draw(60:\R)--(30:\R)--(360:\R);
   \draw(240:\R)--(300:\R)--(360:\R)--(240:\R);
\draw(240:\R)--(30:\R); \draw(180:\R)--(30:\R); 
\draw(180:\R)--(240:\R);
\draw(120:\R)--(180:\R)--(60:\R)--(120:\R);
%\draw(90:\R)--(150:\R); \draw(90:\R)--(30:\R);
\end{scope}

 \draw[->] (7.2, 0) -- (7.8, 0);

\begin{scope}[shift={(9,0)}]
\draw (0:\R)
   \foreach \x in {30,60,90,120,...,360} {  -- (\x:\R) };
 %  \draw[color=purple] (270:\R)--(300:\R);
   \filldraw[color=lightgray] (360:\R)--(60:\R)--(120:\R)--(240:\R)--(270:\R)--(300:\R)--(360:\R);
   \draw(240:\R)--(300:\R)--(360:\R)--(240:\R);
   \draw(240:\R)--(270:\R)--(300:\R);
%\draw(240:\R)--(30:\R); %\draw(180:\R)--(30:\R); 
\draw(180:\R)--(240:\R);
\draw(120:\R)--(180:\R); \draw(60:\R)--(120:\R);
%\draw(90:\R)--(150:\R); \draw(90:\R)--(30:\R);
\draw (240:\R)--(120:\R)--(360:\R)--(60:\R);
\end{scope}

 \draw[->] (10.2, 0) -- (10.8, 0);

\begin{scope}[shift={(12,0)}]
\draw (0:\R)
   \foreach \x in {30,60,90,120,...,360} {  -- (\x:\R) };
 %  \draw[color=purple] (270:\R)--(300:\R);
\draw (270:\R)--(120:\R)--(240:\R)--(180:\R)--(120:\R);
\draw (270:\R)--(60:\R)--(120:\R);
\draw(60:\R)--(300:\R)--(0:\R)--(60:\R);
\end{scope}
\end{tikzpicture}
\end{center}
\end{example}

\begin{remark}
    The hexagonal moves can be interpreted as edges in the universal polytope of triangulations. Note, however, that edges of this polytope also include the usual (quadrilateral) flips and many other moves. See e.g. \cite[8.5.1]{triangulations-book}. 
\end{remark}

\begin{corollary}
Let $y = (y_0, \dots, y_m) \in X_{\F_2}(m)$ be such that $y_i = \one$ for at least one $i$. Then, $\#c^{-1}(y) = 1$ if and only if there exists $i = 0, \dots, m$ such that $y_i \neq y_j$ for every $j \neq i$.
\end{corollary}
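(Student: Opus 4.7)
The proof uses Theorem \ref{thm:main-text} together with a combinatorial analysis of which triangulations admit hexagonal moves. For the direction $(\Leftarrow)$, suppose some index $i$ has $y_i \neq y_j$ for all $j \neq i$. In any $T \in c^{-1}(y)$, every triangle of $T$ must contain $v_i$: otherwise its three vertices would carry colors drawn from the two-element set $\F_2\PP^1 \setminus \{y_i\}$, contradicting the properness of the coloring induced by $T$. A triangulation of $P_{m+1}$ in which every triangle shares a common vertex is necessarily the fan centered at that vertex, which is unique, so $\#c^{-1}(y) = 1$.

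For the direction $(\Rightarrow)$ I argue the contrapositive: if every color of $y$ appears at least twice, then $\#c^{-1}(y) > 1$. Pick any $T \in c^{-1}(y)$. Observe that if $T$ were the fan centered at some $v_i$, propagating the proper $\F_2$-coloring across the triangles $(v_i, v_{i+k}, v_{i+k+1})$ would force the vertices $v_j$ with $j \neq i$ to alternate between the two colors different from $y_i$, making $y_i$ a unique color in $y$. Hence no $T \in c^{-1}(y)$ is a fan. By Theorem \ref{thm:main-text}, it suffices to exhibit a hexagonal move on $T$: applying it yields $T' \neq T$ with $c(T') = c(T) = y$.

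The heart of the proof is the combinatorial claim: \emph{every non-fan triangulation of $P_n$ with $n \geq 6$ admits a hexagonal move.} I prove this by induction on $n$. The base case $n = 6$ is a direct inspection of the 14 hexagon triangulations: the 6 fans admit no hexagonal move, the 2 inscribed-triangle triangulations are themselves the target of an inscribed-triangle move, and the remaining 6 ``non-fan path'' triangulations each have three diagonals forming a (skip-1, main, skip-1) path between antipodal vertices, i.e., a zig-zag configuration, so they admit a zig-zag move.

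For the inductive step, pick an ear $v_k$ of $T$ (cut off by the $T$-diagonal $v_{k-1}v_{k+1}$) and let $T'$ be the triangulation of $P_{n-1}$ obtained by deleting $v_k$. If $T'$ is not a fan, the induction hypothesis provides a hexagonal move on $T'$ whose sub-hexagon also sits inside $T$ (with the edge $v_{k-1}v_{k+1}$, which is a polygon edge of $P_{n-1}$, becoming a $T$-diagonal boundary edge if it appears), so the same move applies to $T$. If instead $T'$ is a fan at some $v_j$, then $j \neq k \pm 1$ (otherwise the diagonal $v_{k-1}v_{k+1}$ would be incident to $v_j$, making $T$ itself a fan). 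The sub-hexagon of $T$ on the vertices $v_j, v_{k-2}, v_{k-1}, v_k, v_{k+1}, v_{k+2}$, taken cyclically in $P_n$, carries exactly the three internal diagonals $v_jv_{k-1}, v_jv_{k+1}, v_{k-1}v_{k+1}$, realizing an inscribed-triangle pattern. The main obstacle I anticipate is carefully handling the degenerate sub-cases $j \in \{k \pm 2, k \pm 3\}$, where two of the six listed vertices coincide and the sub-hexagon collapses to six consecutive vertices of $P_n$; in those cases the same three internal diagonals instead realize a zig-zag pattern between antipodal vertices of the sub-hexagon, but a hexagonal move remains applicable, completing the induction.
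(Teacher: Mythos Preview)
Your approach is exactly the paper's: both reduce, via Theorem~\ref{thm:main-text}, to the claim that a triangulation admits no hexagonal move if and only if it is a fan, and both read off the color condition from the fan. The paper asserts this combinatorial claim in one sentence; you supply an inductive proof, and your argument for $(\Leftarrow)$ (``every triangle must contain $v_i$'') is in fact sharper than what the paper writes.

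Your induction is correct in the generic case, but your description of the degenerate sub-cases needs repair. For $j=k\pm 3$ the six vertices $v_j,v_{k-2},v_{k-1},v_k,v_{k+1},v_{k+2}$ are actually distinct (and consecutive), and the three diagonals $v_jv_{k-1},\,v_jv_{k+1},\,v_{k-1}v_{k+1}$ still form an inscribed triangle, not a zig-zag---so this case is absorbed into the generic argument. The genuine collapse happens only for $j=k\pm 2$: there one of your ``same three internal diagonals'', namely $v_jv_{k\pm 1}$, is a polygon edge, so the hexagon you name does not carry three internal diagonals at all. The fix is to shift the hexagon one step away from $v_j$: for $j=k+2$ take the six consecutive vertices $v_{k-3},v_{k-2},v_{k-1},v_k,v_{k+1},v_{k+2}$; its boundary side $v_{k+2}v_{k-3}$ is a diagonal of the fan $T'$, and the three internal diagonals of $T$ inside it are $v_{k+2}v_{k-2},\,v_{k+2}v_{k-1},\,v_{k-1}v_{k+1}$, which form a zig-zag joining the antipodal pair $v_{k-2},v_{k+1}$. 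With this correction the induction closes.
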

\begin{proof}
    Note that no hexagonal moves can be applied to a triangulation $T$ if and only if $T$ is a fan triangulation.  If $i$ is the vertex that is incident to every diagonal in a fan triangulation, then $y_i \neq y_j$ for every $j \neq i$. 
\end{proof}

We do not have a formula for $\#c^{-1}(y)$ for arbitrary $y \in X_{\F_2}(m)$.

\section{Minimal coverings of type \texorpdfstring{$\A$}{A} cluster varieties}\label{sec:minimal-coverings}

\subsection{Definitions} Recall that a subset $C \subseteq \seeds(A)$ is said to be an $\F$-covering if
\[
\bigcup_{\seed = (\x, Q) \in C} \mathbb{T}_{\x} = \manifold_{\F},
\]
where $\manifold_{\F}$ is the cluster manifold defined over $\F$. We say that the $\F$-covering $C$ is minimal if, moreover, for every $D \subsetneq C$, we have
\[
\bigcup_{\seed = (\x, Q) \in D} \T_{\x} \subsetneq \manifold_{\F}.  
\]

Clearly, a subset $C \subseteq \seeds(A)$ is $\F_2$-minimal if and only if it contains the image of a section of the map \eqref{eq:seeds-to-points-refined}, and the $\F_2$-minimal coverings are precisely such images of sections. Our goal for this section is, in type $\A$, to provide an example of an $\F_2$-minimal covering that is a universal minimal covering, that is, it is an $\F$-minimal covering for any field $\F$. On the other hand, we will show the existence of an $\F_2$-covering that is not an $\F$-covering for any other field $\F$.

\subsection{A minimal covering} For brevity, we will denote
\[
X'_{\F}(m) := X_{\F}(m) \setminus \{(\zero, \infty, \zero, \infty, \dots \zero, \infty)\},
\]
that is the union of all cluster tori in $X_{\F}(m)$. Note that $X_{\F}(m) = X'_{\F}(m)$ if $m$ is even.

For any field $\F$ we denote
\[
\one := [1:1] \in \F\PP^1
\]
and use this to embed $X_{\F_2}(m)$ into $X_{\F}(m)$ for any $m$. Note that under this embedding we have $X'_{\F_2}(m) \subseteq X'_{\F}(m)$. 

\begin{proposition}\label{prop:covering-1}
    For any $m>0$ and any field $\F$ there exists a minimal covering of $X'_{\F}(m)$ with as many elements as $X'_{\F_2}(m)$.
\end{proposition}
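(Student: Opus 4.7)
The plan is to exhibit an explicit section $s$ of the map $c$ from \eqref{eq:seeds-to-points-triangulations} whose image yields an $\F$-covering of $X'_\F(m)$. Since any section automatically has image of size $|X'_{\F_2}(m)|$, only the minimality and covering properties require proof. For minimality, embed $X'_{\F_2}(m) \hookrightarrow X'_\F(m)$ via $\one = [1:1] \in \F\PP^1$: each $y \in X'_{\F_2}(m)$ lies in $\T_{s(y)}^\F$ (by Lemma \ref{lem:cluster-torus-triangulation}, since $y$ is a proper $\F_2$-coloring of $s(y)$) but in no other $\T_{s(y')}^\F$; for if $y \in \T_{s(y')}^\F$ with $y'\neq y$, then $y$ being $\F_2$-valued would force $c(s(y')) = y$ by uniqueness of $\F_2$-colorings, contradicting $c(s(y')) = y'$. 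Hence removing $s(y)$ leaves $y$ uncovered.

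To construct $s(y)$, I would apply a greedy ear-cutting procedure to $y$: at each step, remove the smallest-indexed internal vertex $v$ of the current sub-polygon whose neighbors $v^-, v^+$ satisfy $y_{v^-} \neq y_{v^+}$, adding the diagonal $(v^-, v^+)$, and recurse until only a triangle remains. A tie-breaking modification is needed to prevent intermediate sub-polygons from becoming fully alternating of the form $(\zero, \infty, \zero, \dots)$ (which would leave the procedure stuck with no ears); this is possible because $y$ itself is not the alternating point excluded from $X'_{\F_2}(m)$. The resulting $T(y)$ satisfies $c(T(y)) = y$, giving a section of $c$, and for those $y$ with a unique vertex this recovers the fan triangulation forced by the corollary after Theorem \ref{thm:main-text}.

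Coverage is verified as follows. Given $z \in X'_\F(m)$, apply the same greedy procedure over $\F$ to obtain a triangulation $T_z$ with $z \in \T_{T_z}^\F$, and set $y := c(T_z) \in X'_{\F_2}(m)$. The critical identity is $T_z = T(y)$, from which $z \in \T_{T(y)}^\F = \T_{s(y)}^\F$ as required. This identity is proved by induction on $m$, showing that the $z$-greedy and the $y$-greedy make identical decisions at each step. One direction is immediate: a diagonal $(v^-, v^+)$ added by the $z$-greedy lies in $T_z$, so properness of the coloring $y = c(T_z)$ forces $y_{v^-} \neq y_{v^+}$, making $v$ available as an ear for the $y$-greedy as well. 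The reverse direction—that any vertex $v'$ skipped by the $z$-greedy (because $z_{v'^-} = z_{v'^+}$) is also skipped by the $y$-greedy—requires showing $y_{v'^-} = y_{v'^+}$ by a local analysis of the sub-triangulation of $T_z$ between $v'^-$ and $v'^+$. The main obstacle is this reverse direction combined with the tie-breaking rule: one must verify that the two greedy processes remain synchronized through all stages of the recursion, which is the technical content of the induction.
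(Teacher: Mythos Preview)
Your high-level strategy—build a map $\function\colon X'_\F(m)\to\mathsf{triangulations}(P_{m+1})$ sending each $z$ to a triangulation whose torus contains it, then show the image has exactly $|X'_{\F_2}(m)|$ elements by proving $\function=\function\circ c\circ\function$—is exactly the paper's, and your minimality argument via the embedding $X'_{\F_2}(m)\hookrightarrow X'_\F(m)$ also matches.

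The gap is that your ear-cutting procedure is not actually well-defined, and the parts you leave unspecified are the entire content of the proof. Concretely: take $m=4$ and $z=(\zero,a,\infty,\zero,\infty)$ with $a\neq\zero,\infty$. Your greedy cuts vertex $1$ first (since $z_0\neq z_2$), leaving the quadrilateral on $\{0,2,3,4\}$ with colors $\zero,\infty,\zero,\infty$, which has \emph{no} ear and admits no valid triangulation whatsoever. So the ``tie-breaking modification'' you allude to cannot be a mere tie-break: it must involve look-ahead or backtracking, because the fatal move (cutting vertex $1$) is only exposed as fatal after the sub-polygon degenerates. Your justification ``this is possible because $y$ itself is not the alternating point'' does not address this. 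Once backtracking is introduced, the reverse direction of your synchronization claim (that the $z$-process and the $y=c(T_z)$-process make identical decisions, including identical backtracks) becomes the whole proof, and you explicitly leave it as ``the technical content of the induction''.

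The paper resolves this by abandoning ear-cutting for a structured fan-based procedure (Algorithm~A): scan for the first index $i_1$ with $y_{i_1}\notin\{\zero,\infty\}$ and fan from $i_1$ to $m,0,\dots,i_1-1$; then the first $i_2>i_1$ with $y_{i_2}=\zero$ and fan from $i_2$; and so on, with two explicit backtracking rules when the scan exhausts. The point of this design is that every decision depends only on the coarse data ``is $y_i$ equal to $\zero$, equal to $\infty$, or neither''. The paper then rewrites $c\circ\function$ as a direct string-rewriting rule on $y$ (Algorithm~B) and checks that Algorithm~A applied to $y$ and to $c(\function(y))$ locate the same indices $i_1,i_2,\dots$ and trigger the same backtracking branch; this verifies $\function=\function\circ c\circ\function$ by inspection rather than induction. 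Your greedy, by contrast, makes decisions based on equalities $z_{v^-}=z_{v^+}$ that are not visibly stable under $c\circ\function$, which is why the reverse direction is hard for you and easy for the paper.
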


\begin{proof}
Throughout this proof, we denote $X'(m) := X_{\F}(m)$.    Let $y = (y_0, \dots, y_m) \in X'(m)$. As before, we consider a polygon $P_{m+1}$ with $m+1$ vertices, labeled clockwise by the elements $y_0, \dots, y_{m}$. We will construct a triangulation $T$ such that $y$ is in the corresponding cluster torus. By definition of $X'(m)$ there exists $i>0$ such that $y_i \neq \zero,\infty$. We apply the following algorithm. 

\vspace{1em}

{\underline{\textbf{Algorithm A.}}}

\noindent \underline{Input:} $y = (y_0, \dots, y_m) \in X'(m)$.

\noindent \underline{Output:} A triangulation $T$ of $P_{m+1}$, such that $y$ is a valid proper coloring for $T$. 

\begin{itemize}
    \item[Step 1.] Find the minimal $i_1$ such that $y_{i_1} \neq \zero, \infty$. Draw diagonals between the vertex $i_1$ and $m, 0, 1, \dots, i_1-2, i_1-1$. Note that by construction $y_{i_1} \neq y_j$ for $j = m, 0, 1, \dots, i-1$, so these are all valid diagonals for $y$.
    \item[Step 2.] Find the minimal $i_2 > i_1$ such that $y_{i_2} = \zero$. Draw diagonals from $i_2$ to $m, i_1, i_1 + 1, \dots, i_2 -1$. Again, these are all valid diagonals for $y$. 
    \item[Step 3.] Find the minimal $i_3 > i_2$ such that $y_{i_3} \neq \zero, \infty$, and draw diagonals from $i_3$ to $m$, $i_2, i_2 + 1, \dots, i_3 - 1$.
\end{itemize}
We continue with this procedure (finding $i_k > i_{k-1}$ such that $y_{i_k} \neq \zero, \infty$ if $k$ is odd, and such that $y_{i_k} = \zero$ if $k$ is even) until we either construct a triangulation of $P_{m+1}$ or it is no longer possible to continue. There are two reasons why it may not be possible to continue. 

    \begin{itemize}
        \item[(1)] We have found $i_k$ such that $y_{i_k} \neq \zero, \infty$, but there is no $i_{k+1} > i_{k}$ such that $y_{i_{k+1}} = \zero$. In this case, we backtrack and delete all diagonals from $i_k$ to $m$ and from $i_k$ to $i_{k-1}, i_{k-1}+1, \dots$ we have drawn. We consider the largest $\ell$ such that $y_{\ell} = \zero$. Note that $i_{k-1} \leq \ell < i_{k}$. We instead draw diagonals from $\ell$ to $\ell +1, \ell + 2, \dots, m-1$. Note that these are all valid diagonals, since $y_j \neq \zero$ for $j > \ell$. We still have to triangulate the polygon with vertices $i_{k-1}, i_{k+1}, \dots, \ell, m-1$. But $y_{m-1} \neq \zero$, by assumption, and $y_{m-1} \neq y_{m} = \infty$, while all $y_{i_{k-1}}, \dots, y_{\ell} \in \{\zero, \infty\}$. So we draw diagonals from $m-1$ to $i_{k-1}, i_{k-1}+1, \dots, \ell$. After doing this, we end up with a triangulation of $P_{m+1}$. 

         \item[(2)] We have found $i_k$ such that $y_{i_k} = \zero$, but there is no $i_{k+1} > i_{k}$ such that $y_{i_{k+1}} \neq \zero, \infty$. We backtrack again and delete all the diagonals incident to $i_k$ that we have drawn. Instead, we draw diagonals from $i_{k-1}$ to $m, m-1, \dots, i_{k}$, these are all valid since $y_{i_{k-1}} \neq \zero, \infty$, and also draw diagonals from $i_{k}$ to $i_{k-1}, i_{k-1}+1, \dots, i_{k}-1$. These are all valid again since $y_{i_{k-1}}, \dots, y_{i_{k}-1} \neq \zero$. This gives triangulation of $P_{m+1}$ with $y$ as a proper coloring.
    \end{itemize}

    See Figure \ref{fig:examples-algorithm} for examples of this procedure. In the end, we obtain a triangulation that admits $y$ as a proper coloring, that is, $y$ belongs to the cluster torus defined by this triangulation. In other words, we have a function
    \[
    \function: X'(m) \to \mathsf{triangulations}(P_{m+1})
    \]
    whose image is a covering set. It remains to see that the image of $\function$ has exactly as many elements as $X'_{\F_2}(m)$. For this, we take the composition with the map $c$ from \eqref{eq:seeds-to-points-triangulations},
    \[
    c \circ \function: X'(m) \to X'_{\F_2}(m).
    \]
    We claim that 
    \begin{equation}\label{eq:composition}
    \function = \function \circ c \, \circ \function.
    \end{equation}Note that this implies the desired result, as then $\#\mathrm{image}(\function) \leq \#\mathrm{image}(c \circ\,\function) = \#X'_{\F_2}(m)$.

    To prove \eqref{eq:composition}, we first analyze the map $c \, \circ \function$ more carefully. This is a map $c\, \circ \function: X'(m) \to X'_{\F_2}(m)$. For this, we translate Algorithm A into the following. 
    
    \vspace{1em}

    {\underline{\textbf{Algorithm B.}}}

\noindent \underline{Input:} $y = (y_0, \dots, y_m) \in X'(m)$.

\noindent \underline{Output:} An element $z = (z_0, \dots, z_m) \in X'_{\F_2}(m)$. 

\begin{itemize}
    \item[Step 1.] Find the minimal $i_1$ such that $y_{i_1} \neq \zero, \infty$. Set $z_{i_1} = \one$, and $z_1 = y_1, z_2 = y_2, \dots, z_{i_1-1} = y_{i_1-1}$. % Note that by construction $y_{i_1} \neq y_j$ for $j = m, 0, 1, \dots, i-1$, so these are all valid diagonals for $y$.
    \item[Step 2.] Find the minimal $i_2 > i_1$ such that $y_{i_2} = \zero$. Set $z_{i_2} = y_{i_2} = \zero$, and $z_{i_1+1}, \dots, z_{i_2 - 1}$ alternating between $\one$ and $\infty$, starting with $z_{i_1+1} = \infty$. 
    \item[Step 3.] Find the minimal $i_3 > i_2$ such that $y_{i_3} \neq \zero, \infty$. Set $z_{i_3} = \one$, and $z_{i_2+1} = y_{i_2+1}, \dots, z_{i_3-1} = y_{i_3-1}$.
\end{itemize}
We continue with this procedure (finding $i_k > i_{k-1}$ such that $y_{i_k} \neq \zero, \infty$ if $k$ is odd, and such that $y_{i_k} = \zero$ if $k$ is even) until we either have set values of $z_i$ for all $i = 0, \dots, m$, or it is no longer possible to continue. There are two reasons why it may not be possible to continue. 

    \begin{itemize}
        \item[(1)] We have found $i_k$ such that $y_{i_k} \neq \zero, \infty$, but there is no $i_{k+1} > i_{k}$ such that $y_{i_{k+1}} = \zero$.  In this case, we backtrack and re-do the values of $z_{i_{k-1}+1}, \dots, z_{i_k}$ that we have found before. Instead, we consider the largest $\ell$ such that $y_{\ell} = \zero$. Note that $i_{k-1} \leq \ell < i_{k}$. Set $z_{i_{k-1}+1} = y_{i_{k-1}+1}, \dots, z_{\ell} = y_{\ell}$ (note that these are all $\zero$ or $\infty$), and set $z_{\ell+1}, \dots, z_{m}$ alternating between $\one$ and $\infty$ in such a way that $z_m = \infty$.

        \item[(2)] We have found $i_k$ such that $y_{i_k} = \zero$, but there is no $i_{k+1} > i_{k}$ such that $y_{i_{k+1}} \neq \zero, \infty$. As in (1) above, we backtrack and set $z_{i_{k-1}}, \dots, z_{i_{k}-1}$ alternating between $\one$ and $\infty$, starting with $z_{i_{k-1}} = \one$, while $z_{i_{k}} = \zero$. We also get $z_{i_{k}+1} = y_{i_{k}+1}, \dots, z_{m-1} = y_{m-1}, z_{m} = y_{m}$.
    \end{itemize}

    Comparing Algorithms A and B, it is clear that the output of Algorithm B is precisely $c \circ \function (y)$, see Figure \ref{fig:examples-algorithm-B}. 

    Now let $z = c \circ \function(y)$. We have to show that $\function(z) = \function(y)$. If do not have to perform one of the exceptions (1) or (2) in Algorithm B, we have that the numbers $i_1, i_2, \dots$ found in the first part of Algorithm A are the same for $y$ and for $z$, and this implies that $\function(y) = \function(z)$. 

    If we have to apply (1) for $y$, note that, since $z_{\ell} = \zero$ but $z_{\ell+1}, \dots, z_{m} \in \{\one, \infty\}$, then we also have to apply (1) for $z$, so $\function(y) = \function(z)$. Similarly, if we have to apply (2) for $y$ then we also have to apply (2) for $z$, and we also get $\function(y) = \function(z)$. Thus, $\#\mathrm{image}(\function) \leq \#|X'_{\F_2}(m)|$.
    Now, recall that we can embed $X'_{\F_2}(m)$ into $X'_{\F}(m)$ and, since a triangulation determines a unique point of $X'_{\F_2}(m)$, the restriction of $\function$ to $X'_{\F_2}(m)$ is injective. This implies that $\#\mathrm{image}(\function) \geq \#|X'_{\F_2}(m)|$ and finishes the proof.
    \end{proof}

    From the proof of Proposition \ref{prop:covering-1}, we obtain the following.

\begin{theorem}
   Let $m > 0$ and consider a cluster algebra of type $\A_m$ with really full rank. Then, there exists a set $C \subseteq \seeds(\A_m)$ that is a minimal $\F$-covering for any field $\F$.
\end{theorem}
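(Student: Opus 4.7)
The plan is to extract the desired set $C$ directly from Proposition \ref{prop:covering-1}, verifying that the construction there is field-independent. The key observation is that Algorithm A in the proof of Proposition \ref{prop:covering-1} depends only on the pattern of $\zero$'s, $\infty$'s, and ``other'' values in the input $y = (y_0, \dots, y_m)$; it never uses the specific values of entries in $\F \setminus \{\zero, \infty\}$. Consequently the function $\function$ is defined on $X'_\F(m)$ for every field $\F$, and the image $C := \function(X'_{\F_2}(m)) \subseteq \mathsf{triangulations}(P_{m+1})$ is a well-defined, field-independent set of seeds of cardinality $\#X'_{\F_2}(m)$.

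To see that $C$ is an $\F$-covering of $\manifold_{\F}(\A_m)$ for arbitrary $\F$, I would invoke the identity $\function = \function \circ c \circ \function$ from the proof of Proposition \ref{prop:covering-1}. For any $y \in X'_{\F}(m)$, this identity shows $\function(y) = \function(c(\function(y)))$ with $c(\function(y)) \in X'_{\F_2}(m)$, so $\function(y) \in C$. Since by construction $y$ lies in the cluster torus indexed by $\function(y)$, every point of $X'_{\F}(m)$ is covered.

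For minimality, I would use the set-theoretic embedding $X'_{\F_2}(m) \hookrightarrow X'_{\F}(m)$ induced by $\one := [1:1]$. For a point $y$ in the image of this embedding and a triangulation $T$, the condition $y \in \T_T^{\F}$ reduces to requiring $y_i \neq y_j$ for every diagonal $ij \in T$ (cf.\ Lemma \ref{lem:cluster-torus-triangulation}), which is field-independent. Hence the restriction of any $\F$-covering to $X'_{\F_2}(m)$ is an $\F_2$-covering; but over $\F_2$ each cluster torus is a single point, so any such covering needs at least $\#X'_{\F_2}(m) = |C|$ seeds. Thus $C$ attains the minimum and is therefore a minimal $\F$-covering for every $\F$.

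The main obstacle — more bookkeeping than genuine difficulty — is to translate between the coefficient-free geometric model $X_{\F}(m)$ used in Proposition \ref{prop:covering-1} and an arbitrary really full rank type $\A_m$ cluster algebra. Under the really full rank hypothesis, the frozen variables are invertible and algebraically independent, so the cluster manifold factors (up to the torus of frozens) as $\manifold_{\F}(\A_m) \cong X'_{\F}(m) \times (\F^{\times})^{\#\text{frozens}}$; the cluster tori respect this product, so coverings and minimality of coverings of $\manifold_{\F}(\A_m)$ correspond to coverings and minimality of coverings of $X'_{\F}(m)$. With that identification the theorem follows from the three steps above.
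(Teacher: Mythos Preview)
Your proposal follows the paper's proof essentially step for step: both take $C$ to be the field-independent image of $\function$, derive the covering property from Proposition~\ref{prop:covering-1} via the identity $\function = \function\circ c\circ\function$, and deduce minimality from the embedding $X'_{\F_2}(m)\hookrightarrow X'_{\F}(m)$ (the paper phrases this as ``$z$ can only be covered by $\function(z)$'', you phrase it via a cardinality lower bound; these are equivalent).

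The one substantive divergence is the reduction from the specific geometric model $X_{\F}(m)$ to an arbitrary really full rank type~$\A$ cluster algebra. The paper invokes a stabilization result (\cite[Proposition~5.11]{LSI}) of the form $\var_\F \times (\F^{\times})^n \cong X_{\F}(m)\times (\F^{\times})^{n'}$ and transfers the covering through that isomorphism. Your assertion of a direct product $\manifold_{\F}(\A_m) \cong X'_{\F}(m) \times (\F^{\times})^{\#\text{frozens}}$ is stronger and not obviously true: really full rank controls the rank of the extended exchange matrix, not the algebraic independence of the frozens, and the frozens enter the exchange relations nontrivially, so the cluster manifold is in general not literally a product. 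Your final step therefore needs the paper's stabilization formulation (or an equivalent argument) rather than the product claim; once that substitution is made, the proofs coincide. (There is also a harmless index shift: $X_{\F}(m)$ is type $\A_{m-2}$, so for type $\A_m$ one should write $X_{\F}(m+2)$.)
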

\begin{proof}
Note that the collection $C := \function(X_{\F}(m))$ does not depend on the field $\F$. We claim it is a minimal covering of $X_{\F}(m)$ for any field $\F$. It is a covering by Proposition \ref{prop:covering-1}, and it is minimal since an element $z \in X_{\F_2}(m) \subseteq X_{\F}(m)$ can only be covered by $\function(z)$. 
The variety $X_{\F}(m)$ is a cluster variety of type $\A_{m-2}$ with really full rank and a single frozen. To pass to any cluster variety of type $\A_{m-2}$ and really full rank, we use Proposition 5.11 in \cite{LSI}: if $\var$ denotes any cluster variety of type $\A_{m-2}$ and really full rank, then there exist $n, n'$ such that $\var_\F \times (\F^{\times})^n \cong X_{\F}(m)\times (\F^{\times})^{n'}$; a covering of $X_{\F}(m)$ gives a covering of  $X_{\F}(m)\times (\F^{\times})^{n'} \cong \var \times (\F^{\times})^n$ that, after setting some monomials in frozen variables equal to $1$, gives a covering of $\var_{\F}$. 
\end{proof}

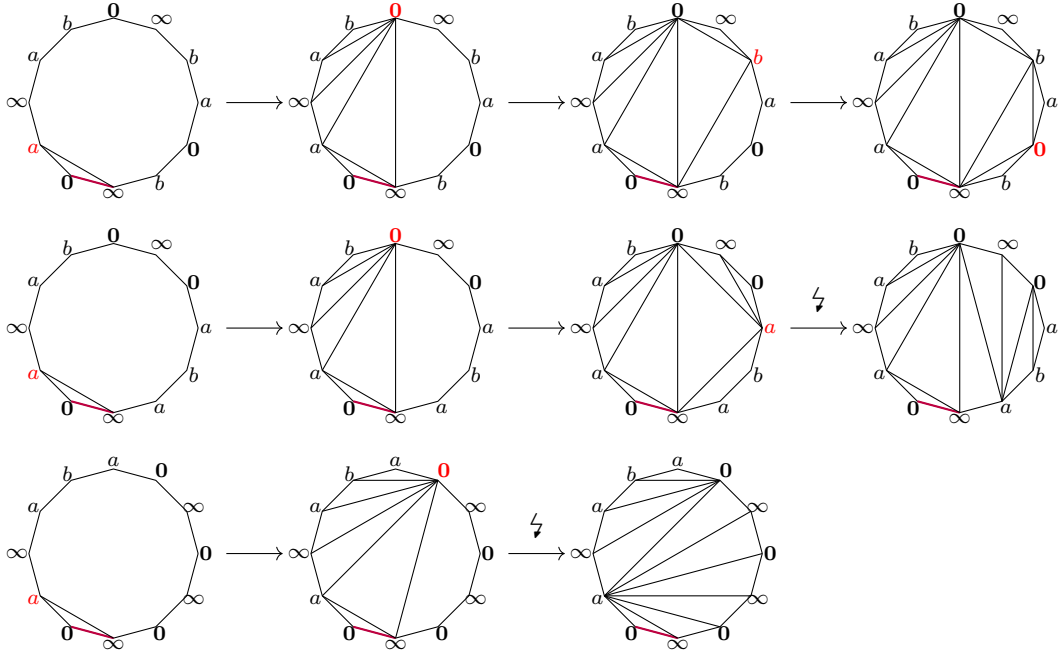
\begin{figure}
    \begin{center}
\begin{tikzpicture}[scale=0.7]
   \newdimen\R
\R=1.5cm
   \draw (0:\R)
   \foreach \x in {30,60,90,...,360} {  -- (\x:\R) };
   \draw[thick, color=purple] (240: \R) -- (270:\R);
   \node at (270:\R+3) {\tiny{$\infty$}};
   \node at (240:\R+4) {\tiny{$\zero$}};
   \node at (210:\R+4) {\color{red}\tiny{$a$}};
   \node at (180:\R+6) {\tiny{$\infty$}};
   \node at (150:\R+4) {\tiny{$a$}};
   \node at (120:\R+4) {\tiny{$b$}};
   \node at (90:\R+4) {\tiny{$\zero$}};
   \node at (60:\R+6) {\tiny{$\infty$}};
   \node at (30:\R+4) {\tiny{$b$}};
   \node at (360:\R+4) {\tiny{$a$}};
   \node at (330:\R+4) {\tiny{$\zero$}};
   \node at (300:\R+4) {\tiny{$b$}};
    \draw(210:\R)--(270:\R);
   %\draw (60:\R)--(180:\R)--(360:\R)--(240:\R);

\draw[->] (2,0) -- (3,0);

\begin{scope}[shift={(5,0)}]

   \draw (0:\R)
   \foreach \x in {30,60,90,...,360} {  -- (\x:\R) };
   \draw[thick, color=purple] (240: \R) -- (270:\R);
   \node at (270:\R+3) {\tiny{$\infty$}};
   \node at (240:\R+4) {\tiny{$\zero$}};
   \node at (210:\R+4) {\tiny{$a$}};
   \node at (180:\R+6) {\tiny{$\infty$}};
   \node at (150:\R+4) {\tiny{$a$}};
   \node at (120:\R+4) {\tiny{$b$}};
   \node at (90:\R+4) {\color{red}\tiny{$\zero$}};
   \node at (60:\R+6) {\tiny{$\infty$}};
   \node at (30:\R+4) {\tiny{$b$}};
   \node at (360:\R+4) {\tiny{$a$}};
   \node at (330:\R+4) {\tiny{$\zero$}};
   \node at (300:\R+4) {\tiny{$b$}};
    \draw(210:\R)--(270:\R);
    \draw(90:\R) -- (270:\R);
    \draw(90:\R)--(210:\R);
    \draw(90:\R)--(180:\R);
    \draw(90:\R)--(150:\R);
    \draw[->] (2,0) -- (3,0);

    \begin{scope}[shift={(5,0)}]

   \draw (0:\R)
   \foreach \x in {30,60,90,...,360} {  -- (\x:\R) };
   \draw[thick, color=purple] (240: \R) -- (270:\R);
   \node at (270:\R+3) {\tiny{$\infty$}};
   \node at (240:\R+4) {\tiny{$\zero$}};
   \node at (210:\R+4) {\tiny{$a$}};
   \node at (180:\R+6) {\tiny{$\infty$}};
   \node at (150:\R+4) {\tiny{$a$}};
   \node at (120:\R+4) {\tiny{$b$}};
   \node at (90:\R+4) {\tiny{$\zero$}};
   \node at (60:\R+6) {\tiny{$\infty$}};
   \node at (30:\R+4) {\color{red}\tiny{$b$}};
   \node at (360:\R+4) {\tiny{$a$}};
   \node at (330:\R+4) {\tiny{$\zero$}};
   \node at (300:\R+4) {\tiny{$b$}};
    \draw(210:\R)--(270:\R);
    \draw(90:\R) -- (270:\R);
    \draw(90:\R)--(210:\R);
    \draw(90:\R)--(180:\R);
    \draw(90:\R)--(150:\R);
    \draw(30:\R)--(90:\R);
    \draw(30:\R)--(270:\R);
\draw[->] (2,0) -- (3,0);
    \begin{scope}[shift={(5,0)}]

   \draw (0:\R)
   \foreach \x in {30,60,90,...,360} {  -- (\x:\R) };
   \draw[thick, color=purple] (240: \R) -- (270:\R);
   \node at (270:\R+3) {\tiny{$\infty$}};
   \node at (240:\R+4) {\tiny{$\zero$}};
   \node at (210:\R+4) {\tiny{$a$}};
   \node at (180:\R+6) {\tiny{$\infty$}};
   \node at (150:\R+4) {\tiny{$a$}};
   \node at (120:\R+4) {\tiny{$b$}};
   \node at (90:\R+4) {\tiny{$\zero$}};
   \node at (60:\R+6) {\tiny{$\infty$}};
   \node at (30:\R+4) {\tiny{$b$}};
   \node at (360:\R+4) {\tiny{$a$}};
   \node at (330:\R+4) {\color{red}\tiny{$\zero$}};
   \node at (300:\R+4) {\tiny{$b$}};
    \draw(210:\R)--(270:\R);
    \draw(90:\R) -- (270:\R);
    \draw(90:\R)--(210:\R);
    \draw(90:\R)--(180:\R);
    \draw(90:\R)--(150:\R);
    \draw(30:\R)--(90:\R);
    \draw(30:\R)--(270:\R);
    \draw(330:\R)--(30:\R);
    \draw(330:\R)--(270:\R);
    \end{scope}
    \end{scope}
\end{scope}

\begin{scope}[shift={(0, -4)}]
   \draw (0:\R)
   \foreach \x in {30,60,90,...,360} {  -- (\x:\R) };
   \draw[thick, color=purple] (240: \R) -- (270:\R);
   \node at (270:\R+3) {\tiny{$\infty$}};
   \node at (240:\R+4) {\tiny{$\zero$}};
   \node at (210:\R+4) {\color{red}\tiny{$a$}};
   \node at (180:\R+6) {\tiny{$\infty$}};
   \node at (150:\R+4) {\tiny{$a$}};
   \node at (120:\R+4) {\tiny{$b$}};
   \node at (90:\R+4) {\tiny{$\zero$}};
   \node at (60:\R+6) {\tiny{$\infty$}};
   \node at (30:\R+4) {\tiny{$\zero$}};
   \node at (360:\R+4) {\tiny{$a$}};
   \node at (330:\R+4) {\tiny{$b$}};
   \node at (300:\R+4) {\tiny{$a$}};
    \draw(210:\R)--(270:\R);
   %\draw (60:\R)--(180:\R)--(360:\R)--(240:\R);

\draw[->] (2,0) -- (3,0);

\begin{scope}[shift={(5,0)}]

   \draw (0:\R)
   \foreach \x in {30,60,90,...,360} {  -- (\x:\R) };
   \draw[thick, color=purple] (240: \R) -- (270:\R);
   \node at (270:\R+3) {\tiny{$\infty$}};
   \node at (240:\R+4) {\tiny{$\zero$}};
   \node at (210:\R+4) {\tiny{$a$}};
   \node at (180:\R+6) {\tiny{$\infty$}};
   \node at (150:\R+4) {\tiny{$a$}};
   \node at (120:\R+4) {\tiny{$b$}};
   \node at (90:\R+4) {\color{red}\tiny{$\zero$}};
   \node at (60:\R+6) {\tiny{$\infty$}};
   \node at (30:\R+4) {\tiny{$\zero$}};
   \node at (360:\R+4) {\tiny{$a$}};
   \node at (330:\R+4) {\tiny{$b$}};
   \node at (300:\R+4) {\tiny{$a$}};
    \draw(210:\R)--(270:\R);
    \draw(90:\R) -- (270:\R);
    \draw(90:\R)--(210:\R);
    \draw(90:\R)--(180:\R);
    \draw(90:\R)--(150:\R);
    \draw[->] (2,0) -- (3,0);

    \begin{scope}[shift={(5,0)}]

   \draw (0:\R)
   \foreach \x in {30,60,90,...,360} {  -- (\x:\R) };
   \draw[thick, color=purple] (240: \R) -- (270:\R);
   \node at (270:\R+3) {\tiny{$\infty$}};
   \node at (240:\R+4) {\tiny{$\zero$}};
   \node at (210:\R+4) {\tiny{$a$}};
   \node at (180:\R+6) {\tiny{$\infty$}};
   \node at (150:\R+4) {\tiny{$a$}};
   \node at (120:\R+4) {\tiny{$b$}};
   \node at (90:\R+4) {\tiny{$\zero$}};
   \node at (60:\R+6) {\tiny{$\infty$}};
   \node at (30:\R+4) {\tiny{$\zero$}};
   \node at (360:\R+4) {\color{red}\tiny{$a$}};
   \node at (330:\R+4) {\tiny{$b$}};
   \node at (300:\R+4) {\tiny{$a$}};
    \draw(210:\R)--(270:\R);
    \draw(90:\R) -- (270:\R);
    \draw(90:\R)--(210:\R);
    \draw(90:\R)--(180:\R);
    \draw(90:\R)--(150:\R);
    \draw(360:\R)--(90:\R);
    \draw(360:\R)--(60:\R);
    \draw(360:\R)--(270:\R);
 
\draw[->] (2,0) -- (3,0);
\node at (2.5, 0.5) {!};
    \begin{scope}[shift={(5,0)}]

   \draw (0:\R)
   \foreach \x in {30,60,90,...,360} {  -- (\x:\R) };
   \draw[thick, color=purple] (240: \R) -- (270:\R);
   \node at (270:\R+3) {\tiny{$\infty$}};
   \node at (240:\R+4) {\tiny{$\zero$}};
   \node at (210:\R+4) {\tiny{$a$}};
   \node at (180:\R+6) {\tiny{$\infty$}};
   \node at (150:\R+4) {\tiny{$a$}};
   \node at (120:\R+4) {\tiny{$b$}};
   \node at (90:\R+4) {\tiny{$\zero$}};
   \node at (60:\R+6) {\tiny{$\infty$}};
   \node at (30:\R+4) {\tiny{$\zero$}};
   \node at (360:\R+4) {\tiny{$a$}};
   \node at (330:\R+4) {\tiny{$b$}};
   \node at (300:\R+4) {\tiny{$a$}};
    \draw(210:\R)--(270:\R);
    \draw(90:\R) -- (270:\R);
    \draw(90:\R)--(210:\R);
    \draw(90:\R)--(180:\R);
    \draw(90:\R)--(150:\R);
    \draw(30:\R)--(300:\R);
    \draw(30:\R)--(330:\R);
    \draw(300:\R)--(90:\R);
    \draw(300:\R)--(60:\R);
    \end{scope}
    \end{scope}
\end{scope}
\begin{scope}[shift={(0, -4)}]
   \draw (0:\R)
   \foreach \x in {30,60,90,...,360} {  -- (\x:\R) };
   \draw[thick, color=purple] (240: \R) -- (270:\R);
   \node at (270:\R+3) {\tiny{$\infty$}};
   \node at (240:\R+4) {\tiny{$\zero$}};
   \node at (210:\R+4) {\color{red}\tiny{$a$}};
   \node at (180:\R+6) {\tiny{$\infty$}};
   \node at (150:\R+4) {\tiny{$a$}};
   \node at (120:\R+4) {\tiny{$b$}};
   \node at (90:\R+4) {\tiny{$a$}};
   \node at (60:\R+6) {\tiny{$\zero$}};
   \node at (30:\R+4) {\tiny{$\infty$}};
   \node at (360:\R+4) {\tiny{$\zero$}};
   \node at (330:\R+4) {\tiny{$\infty$}};
   \node at (300:\R+4) {\tiny{$\zero$}};
    \draw(210:\R)--(270:\R);
   %\draw (60:\R)--(180:\R)--(360:\R)--(240:\R);

\draw[->] (2,0) -- (3,0);

\begin{scope}[shift={(5,0)}]

   \draw (0:\R)
   \foreach \x in {30,60,90,...,360} {  -- (\x:\R) };
   \draw[thick, color=purple] (240: \R) -- (270:\R);
   \node at (270:\R+3) {\tiny{$\infty$}};
   \node at (240:\R+4) {\tiny{$\zero$}};
   \node at (210:\R+4) {\tiny{$a$}};
   \node at (180:\R+6) {\tiny{$\infty$}};
   \node at (150:\R+4) {\tiny{$a$}};
   \node at (120:\R+4) {\tiny{$b$}};
   \node at (90:\R+4) {\tiny $a$};
   \node at (60:\R+6) {\color{red}\tiny{$\zero$}};
   \node at (30:\R+4) {\tiny{$\infty$}};
   \node at (360:\R+4) {\tiny{$\zero$}};
   \node at (330:\R+4) {\tiny{$\infty$}};
   \node at (300:\R+4) {\tiny{$\zero$}};
    \draw(210:\R)--(270:\R);
    \draw(60:\R) -- (270:\R);
    \draw(60:\R)--(210:\R);
    \draw(60:\R)--(180:\R);
    \draw(60:\R)--(150:\R);
    \draw(60:\R)--(120:\R);
    \draw[->] (2,0) -- (3,0);
    \node at (2.5, 0.5) {!};

    \begin{scope}[shift={(5,0)}]

   \draw (0:\R)
   \foreach \x in {30,60,90,...,360} {  -- (\x:\R) };
   \draw[thick, color=purple] (240: \R) -- (270:\R);
   \node at (270:\R+3) {\tiny{$\infty$}};
   \node at (240:\R+4) {\tiny{$\zero$}};
   \node at (210:\R+4) {\tiny{$a$}};
   \node at (180:\R+6) {\tiny{$\infty$}};
   \node at (150:\R+4) {\tiny{$a$}};
   \node at (120:\R+4) {\tiny{$b$}};
   \node at (90:\R+4) {\tiny{$a$}};
   \node at (60:\R+6) {\tiny{$\zero$}};
   \node at (30:\R+4) {\tiny{$\infty$}};
   \node at (360:\R+4) {\tiny $\zero$};
   \node at (330:\R+4) {\tiny{$\infty$}};
   \node at (300:\R+4) {\tiny{$\zero$}};
    \draw(210:\R)--(270:\R);
    \draw(60:\R)--(210:\R);
    \draw(60:\R)--(180:\R);
    \draw(60:\R)--(150:\R);
    \draw(60:\R)--(120:\R);
    \draw(210:\R)--(30:\R);
    \draw(210:\R)--(360:\R);
    \draw(210:\R)--(330:\R);
    \draw(210:\R)--(300:\R);
    \end{scope}
\end{scope}
\end{scope}
\end{scope}
\end{tikzpicture}
\end{center}
\caption{Examples of Algorithm A that inputs a point $y$ in $X'(m)$ and outputs a triangulation of $T$ of the $m+1$-gon, in such a way that $y$ is in the cluster torus defined by $T$. Here, $a\neq b$ and neither are $\zero, \infty$, In the top example, we do not have to backtrack. In the middle example, we backtrack at the ! symbol, since there is no $\zero$ after the red ${\color{red} a}$. In the bottom example, we backtrack at the !  symbol, since there is no element different from $\zero, \infty$ after the red ${\color{red} \zero}$.}
\label{fig:examples-algorithm}
\end{figure}

%%%%%%%%%%%%%%%%%%%%%%%%%%
%%%%%%%%%%%%%%%%%%%%%%%%%

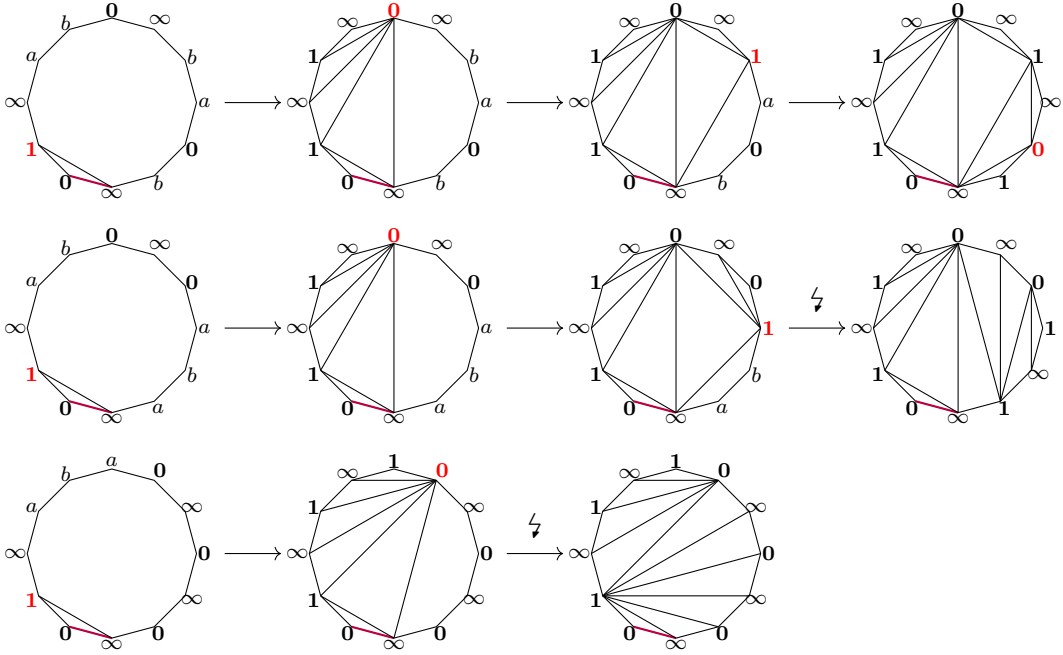
\begin{figure}
    \begin{center}
\begin{tikzpicture}[scale=0.7]
   \newdimen\R
\R=1.5cm
   \draw (0:\R)
   \foreach \x in {30,60,90,...,360} {  -- (\x:\R) };
   \draw[thick, color=purple] (240: \R) -- (270:\R);
   \node at (270:\R+3) {\tiny{$\infty$}};
   \node at (240:\R+4) {\tiny{$\zero$}};
   \node at (210:\R+4) {\color{red}\tiny{$\one$}};
   \node at (180:\R+6) {\tiny{$\infty$}};
   \node at (150:\R+4) {\tiny{$a$}};
   \node at (120:\R+4) {\tiny{$b$}};
   \node at (90:\R+4) {\tiny{$\zero$}};
   \node at (60:\R+6) {\tiny{$\infty$}};
   \node at (30:\R+4) {\tiny{$b$}};
   \node at (360:\R+4) {\tiny{$a$}};
   \node at (330:\R+4) {\tiny{$\zero$}};
   \node at (300:\R+4) {\tiny{$b$}};
    \draw(210:\R)--(270:\R);
   %\draw (60:\R)--(180:\R)--(360:\R)--(240:\R);

\draw[->] (2,0) -- (3,0);

\begin{scope}[shift={(5,0)}]

   \draw (0:\R)
   \foreach \x in {30,60,90,...,360} {  -- (\x:\R) };
   \draw[thick, color=purple] (240: \R) -- (270:\R);
   \node at (270:\R+3) {\tiny{$\infty$}};
   \node at (240:\R+4) {\tiny{$\zero$}};
   \node at (210:\R+4) {\tiny{$\one$}};
   \node at (180:\R+6) {\tiny{$\infty$}};
   \node at (150:\R+4) {\tiny{$\one$}};
   \node at (120:\R+4) {\tiny{$\infty$}};
   \node at (90:\R+4) {\color{red}\tiny{$\zero$}};
   \node at (60:\R+6) {\tiny{$\infty$}};
   \node at (30:\R+4) {\tiny{$b$}};
   \node at (360:\R+4) {\tiny{$a$}};
   \node at (330:\R+4) {\tiny{$\zero$}};
   \node at (300:\R+4) {\tiny{$b$}};
    \draw(210:\R)--(270:\R);
    \draw(90:\R) -- (270:\R);
    \draw(90:\R)--(210:\R);
    \draw(90:\R)--(180:\R);
    \draw(90:\R)--(150:\R);
    \draw[->] (2,0) -- (3,0);

    \begin{scope}[shift={(5,0)}]

   \draw (0:\R)
   \foreach \x in {30,60,90,...,360} {  -- (\x:\R) };
   \draw[thick, color=purple] (240: \R) -- (270:\R);
  \node at (270:\R+3) {\tiny{$\infty$}};
   \node at (240:\R+4) {\tiny{$\zero$}};
   \node at (210:\R+4) {\tiny{$\one$}};
   \node at (180:\R+6) {\tiny{$\infty$}};
   \node at (150:\R+4) {\tiny{$\one$}};
   \node at (120:\R+4) {\tiny{$\infty$}};
   \node at (90:\R+4) {\tiny{$\zero$}};
   \node at (60:\R+6) {\tiny{$\infty$}};
   \node at (30:\R+4) {\color{red}\tiny{$\one$}};
   \node at (360:\R+4) {\tiny{$a$}};
   \node at (330:\R+4) {\tiny{$\zero$}};
   \node at (300:\R+4) {\tiny{$b$}};
    \draw(210:\R)--(270:\R);
    \draw(90:\R) -- (270:\R);
    \draw(90:\R)--(210:\R);
    \draw(90:\R)--(180:\R);
    \draw(90:\R)--(150:\R);
    \draw(30:\R)--(90:\R);
    \draw(30:\R)--(270:\R);
\draw[->] (2,0) -- (3,0);
    \begin{scope}[shift={(5,0)}]

   \draw (0:\R)
   \foreach \x in {30,60,90,...,360} {  -- (\x:\R) };
   \draw[thick, color=purple] (240: \R) -- (270:\R);
  \node at (270:\R+3) {\tiny{$\infty$}};
   \node at (240:\R+4) {\tiny{$\zero$}};
   \node at (210:\R+4) {\tiny{$\one$}};
   \node at (180:\R+6) {\tiny{$\infty$}};
   \node at (150:\R+4) {\tiny{$\one$}};
   \node at (120:\R+4) {\tiny{$\infty$}};
   \node at (90:\R+4) {\tiny{$\zero$}};
   \node at (60:\R+6) {\tiny{$\infty$}};
   \node at (30:\R+4) {\tiny{$\one$}};
   \node at (360:\R+4) {\tiny{$\infty$}};
   \node at (330:\R+4) {\color{red}\tiny{$\zero$}};
   \node at (300:\R+4) {\tiny{$\one$}};
    \draw(210:\R)--(270:\R);
    \draw(90:\R) -- (270:\R);
    \draw(90:\R)--(210:\R);
    \draw(90:\R)--(180:\R);
    \draw(90:\R)--(150:\R);
    \draw(30:\R)--(90:\R);
    \draw(30:\R)--(270:\R);
    \draw(330:\R)--(30:\R);
    \draw(330:\R)--(270:\R);
    \end{scope}
    \end{scope}
\end{scope}

\begin{scope}[shift={(0, -4)}]
   \draw (0:\R)
   \foreach \x in {30,60,90,...,360} {  -- (\x:\R) };
   \draw[thick, color=purple] (240: \R) -- (270:\R);
   \node at (270:\R+3) {\tiny{$\infty$}};
   \node at (240:\R+4) {\tiny{$\zero$}};
   \node at (210:\R+4) {\color{red}\tiny{$\one$}};
   \node at (180:\R+6) {\tiny{$\infty$}};
   \node at (150:\R+4) {\tiny{$a$}};
   \node at (120:\R+4) {\tiny{$b$}};
   \node at (90:\R+4) {\tiny{$\zero$}};
   \node at (60:\R+6) {\tiny{$\infty$}};
   \node at (30:\R+4) {\tiny{$\zero$}};
   \node at (360:\R+4) {\tiny{$a$}};
   \node at (330:\R+4) {\tiny{$b$}};
   \node at (300:\R+4) {\tiny{$a$}};
    \draw(210:\R)--(270:\R);
   %\draw (60:\R)--(180:\R)--(360:\R)--(240:\R);

\draw[->] (2,0) -- (3,0);

\begin{scope}[shift={(5,0)}]

   \draw (0:\R)
   \foreach \x in {30,60,90,...,360} {  -- (\x:\R) };
   \draw[thick, color=purple] (240: \R) -- (270:\R);
   \node at (270:\R+3) {\tiny{$\infty$}};
   \node at (240:\R+4) {\tiny{$\zero$}};
   \node at (210:\R+4) {\tiny{$\one$}};
   \node at (180:\R+6) {\tiny{$\infty$}};
   \node at (150:\R+4) {\tiny{$\one$}};
   \node at (120:\R+4) {\tiny{$\infty$}};
   \node at (90:\R+4) {\color{red}\tiny{$\zero$}};
   \node at (60:\R+6) {\tiny{$\infty$}};
   \node at (30:\R+4) {\tiny{$\zero$}};
   \node at (360:\R+4) {\tiny{$a$}};
   \node at (330:\R+4) {\tiny{$b$}};
   \node at (300:\R+4) {\tiny{$a$}};
    \draw(210:\R)--(270:\R);
    \draw(90:\R) -- (270:\R);
    \draw(90:\R)--(210:\R);
    \draw(90:\R)--(180:\R);
    \draw(90:\R)--(150:\R);
    \draw[->] (2,0) -- (3,0);

    \begin{scope}[shift={(5,0)}]

   \draw (0:\R)
   \foreach \x in {30,60,90,...,360} {  -- (\x:\R) };
   \draw[thick, color=purple] (240: \R) -- (270:\R);
   \node at (270:\R+3) {\tiny{$\infty$}};
   \node at (240:\R+4) {\tiny{$\zero$}};
   \node at (210:\R+4) {\tiny{$\one$}};
   \node at (180:\R+6) {\tiny{$\infty$}};
   \node at (150:\R+4) {\tiny{$\one$}};
   \node at (120:\R+4) {\tiny{$\infty$}};
   \node at (90:\R+4) {\tiny{$\zero$}};
   \node at (60:\R+6) {\tiny{$\infty$}};
   \node at (30:\R+4) {\tiny{$\zero$}};
   \node at (360:\R+4) {\color{red}\tiny{$\one$}};
   \node at (330:\R+4) {\tiny{$b$}};
   \node at (300:\R+4) {\tiny{$a$}};
    \draw(210:\R)--(270:\R);
    \draw(90:\R) -- (270:\R);
    \draw(90:\R)--(210:\R);
    \draw(90:\R)--(180:\R);
    \draw(90:\R)--(150:\R);
    \draw(360:\R)--(90:\R);
    \draw(360:\R)--(60:\R);
    \draw(360:\R)--(270:\R);
 
\draw[->] (2,0) -- (3,0);
\node at (2.5, 0.5) {!};
    \begin{scope}[shift={(5,0)}]

   \draw (0:\R)
   \foreach \x in {30,60,90,...,360} {  -- (\x:\R) };
   \draw[thick, color=purple] (240: \R) -- (270:\R);
   \node at (270:\R+3) {\tiny{$\infty$}};
   \node at (240:\R+4) {\tiny{$\zero$}};
   \node at (210:\R+4) {\tiny{$\one$}};
   \node at (180:\R+6) {\tiny{$\infty$}};
   \node at (150:\R+4) {\tiny{$\one$}};
   \node at (120:\R+4) {\tiny{$\infty$}};
   \node at (90:\R+4) {\tiny{$\zero$}};
   \node at (60:\R+6) {\tiny{$\infty$}};
   \node at (30:\R+4) {\tiny{$\zero$}};
   \node at (360:\R+4) {\tiny{$\one$}};
   \node at (330:\R+4) {\tiny{$\infty$}};
   \node at (300:\R+4) {\tiny{$\one$}};
    \draw(210:\R)--(270:\R);
    \draw(90:\R) -- (270:\R);
    \draw(90:\R)--(210:\R);
    \draw(90:\R)--(180:\R);
    \draw(90:\R)--(150:\R);
    \draw(30:\R)--(300:\R);
    \draw(30:\R)--(330:\R);
    \draw(300:\R)--(90:\R);
    \draw(300:\R)--(60:\R);
    \end{scope}
    \end{scope}
\end{scope}
\begin{scope}[shift={(0, -4)}]
   \draw (0:\R)
   \foreach \x in {30,60,90,...,360} {  -- (\x:\R) };
   \draw[thick, color=purple] (240: \R) -- (270:\R);
   \node at (270:\R+3) {\tiny{$\infty$}};
   \node at (240:\R+4) {\tiny{$\zero$}};
   \node at (210:\R+4) {\color{red}\tiny{$\one$}};
   \node at (180:\R+6) {\tiny{$\infty$}};
   \node at (150:\R+4) {\tiny{$a$}};
   \node at (120:\R+4) {\tiny{$b$}};
   \node at (90:\R+4) {\tiny{$a$}};
   \node at (60:\R+6) {\tiny{$\zero$}};
   \node at (30:\R+4) {\tiny{$\infty$}};
   \node at (360:\R+4) {\tiny{$\zero$}};
   \node at (330:\R+4) {\tiny{$\infty$}};
   \node at (300:\R+4) {\tiny{$\zero$}};
    \draw(210:\R)--(270:\R);
   %\draw (60:\R)--(180:\R)--(360:\R)--(240:\R);

\draw[->] (2,0) -- (3,0);

\begin{scope}[shift={(5,0)}]

   \draw (0:\R)
   \foreach \x in {30,60,90,...,360} {  -- (\x:\R) };
   \draw[thick, color=purple] (240: \R) -- (270:\R);
   \node at (270:\R+3) {\tiny{$\infty$}};
   \node at (240:\R+4) {\tiny{$\zero$}};
   \node at (210:\R+4) {\tiny{$\one$}};
   \node at (180:\R+6) {\tiny{$\infty$}};
   \node at (150:\R+4) {\tiny{$\one$}};
   \node at (120:\R+4) {\tiny{$\infty$}};
   \node at (90:\R+4) {\tiny $\one$};
   \node at (60:\R+6) {\color{red}\tiny{$\zero$}};
   \node at (30:\R+4) {\tiny{$\infty$}};
   \node at (360:\R+4) {\tiny{$\zero$}};
   \node at (330:\R+4) {\tiny{$\infty$}};
   \node at (300:\R+4) {\tiny{$\zero$}};
    \draw(210:\R)--(270:\R);
    \draw(60:\R) -- (270:\R);
    \draw(60:\R)--(210:\R);
    \draw(60:\R)--(180:\R);
    \draw(60:\R)--(150:\R);
    \draw(60:\R)--(120:\R);
    \draw[->] (2,0) -- (3,0);
    \node at (2.5, 0.5) {!};

    \begin{scope}[shift={(5,0)}]

   \draw (0:\R)
   \foreach \x in {30,60,90,...,360} {  -- (\x:\R) };
   \draw[thick, color=purple] (240: \R) -- (270:\R);
   \node at (270:\R+3) {\tiny{$\infty$}};
   \node at (240:\R+4) {\tiny{$\zero$}};
   \node at (210:\R+4) {\tiny{$\one$}};
   \node at (180:\R+6) {\tiny{$\infty$}};
   \node at (150:\R+4) {\tiny{$\one$}};
   \node at (120:\R+4) {\tiny{$\infty$}};
   \node at (90:\R+4) {\tiny{$\one$}};
   \node at (60:\R+6) {\tiny{$\zero$}};
   \node at (30:\R+4) {\tiny{$\infty$}};
   \node at (360:\R+4) {\tiny $\zero$};
   \node at (330:\R+4) {\tiny{$\infty$}};
   \node at (300:\R+4) {\tiny{$\zero$}};
    \draw(210:\R)--(270:\R);
    \draw(60:\R)--(210:\R);
    \draw(60:\R)--(180:\R);
    \draw(60:\R)--(150:\R);
    \draw(60:\R)--(120:\R);
    \draw(210:\R)--(30:\R);
    \draw(210:\R)--(360:\R);
    \draw(210:\R)--(330:\R);
    \draw(210:\R)--(300:\R);
    \end{scope}
\end{scope}
\end{scope}
\end{scope}
\end{tikzpicture}
\end{center}
\caption{Examples of the procedure that inputs a point $y$ in $X'(m)$ and outputs an element $z \in X'_{\F_2}(m)$, compare with Figure \ref{fig:examples-algorithm}.}
\label{fig:examples-algorithm-B}
\end{figure}
\subsection{Counterexample}\label{sec:counterexample}

In this section, we verify the existence of an $\F_2$-covering that is not an $\F$-covering for any field $\F \not\cong \F_2$. 

Recall that if $y \in X_{\F}(m)$ then a diagonal $ij$ of the polygon $P_m$ is invalid for $y$ if there does not exist a triangulation $T$ of $P_{m+1}$ containing $ij$ such that $y$ is in the corresponding cluster torus, cf. Lemma \ref{lem:invalid-diagonal}. Let us denote the set of invalid diagonals of $y$ by $I(y)$. 

\begin{lemma}\label{lem:invalid-diagonals}
Let $z \in X_{\F_2}(m)$ and $y \in X_{\F}(m)$. The following conditions are equivalent.
\begin{enumerate}
    \item[(1)] There exists a triangulation $T$ of $P_{m+1}$ that admits $z$ as a proper coloring, but not $y$.
    \item[(2)] $I(y) \not\subseteq I(z)$.
\end{enumerate}
\end{lemma}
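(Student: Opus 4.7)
The plan is to deduce both implications as essentially a bookkeeping consequence of Lemma \ref{lem:invalid-diagonal}, which characterizes valid diagonals as precisely those appearing in some triangulation whose cluster torus contains the given point, together with Lemma \ref{lem:cluster-torus-triangulation}, which says that a point lies in the cluster torus of $T$ exactly when every diagonal of $T$ has distinct endpoint labels. I do not foresee any genuine obstacle; the only care required is to keep track of what ``proper coloring'' means for $y$ versus $z$ and to invoke the right definition of invalidity.

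For the implication (1) $\Rightarrow$ (2), I would start from a triangulation $T$ that admits $z$ but not $y$ and extract from $T$ a witness diagonal lying in $I(y) \setminus I(z)$. Since $T$ does not admit $y$ as a proper coloring, Lemma \ref{lem:cluster-torus-triangulation} yields a diagonal $ij \in T$ with $y_i = y_j$, so clause (1) of Definition \ref{def:valid-diagonal-point} places $ij$ in $I(y)$. On the other hand, the mere fact that $ij \in T$ and $T$ admits $z$ means, by Lemma \ref{lem:invalid-diagonal}, that $ij$ is valid for $z$, hence $ij \notin I(z)$. This produces the required element of $I(y) \setminus I(z)$.

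For the converse (2) $\Rightarrow$ (1), I would pick any diagonal $ij \in I(y) \setminus I(z)$ and use it to build the desired triangulation. Because $ij \notin I(z)$, Lemma \ref{lem:invalid-diagonal} supplies a triangulation $T$ with $ij \in T$ whose cluster torus contains $z$, so $T$ admits $z$ as a proper coloring. Because $ij \in I(y)$ and $T$ contains $ij$, the same lemma (in its contrapositive form) forbids $y$ from lying in the cluster torus of $T$, so $T$ does not admit $y$. Thus $T$ realizes condition (1), which completes the argument.
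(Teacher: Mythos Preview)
Your proposal is correct and follows essentially the same approach as the paper's proof: both directions are immediate from Lemma~\ref{lem:invalid-diagonal}, with the forward direction extracting a diagonal of $T$ that is invalid for $y$ but valid for $z$, and the backward direction using any $ij \in I(y)\setminus I(z)$ to produce the desired triangulation. The only cosmetic difference is that you make the lemma references more explicit and, in $(1)\Rightarrow(2)$, you directly locate a diagonal with $y_i=y_j$ rather than arguing by contradiction about validity; the content is identical.
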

\begin{proof}
    (1) $\Rightarrow$ (2). Let $T$ be a triangulation as in  (1), and note that every diagonal in this triangulation is valid for $z$. If every diagonal were also valid for $y$, then $y$ would be a proper coloring of $T$. Thus, there must be a diagonal that is invalid for $y$ but valid for $z$, i.e., $I(y) \not\subseteq I(z)$.

    (2) $\Rightarrow$ (1). Let $ij \in I(y) \setminus I(z)$. Since $ij$ is valid for $z$, there exists a triangulation having $ij$ as a diagonal that admits $z$ as a proper coloring. But this triangulation cannot admit $y$ as a proper coloring, since $(ij) \in I(y)$.
\end{proof}

We will use Lemma \ref{lem:invalid-diagonals} as follows.

\begin{lemma}\label{lem:key-counterexample}
Assume there exists $y \in X'_{\F}(m)$ such that for every $z \in X'_{\F_2}(m)$, we have $I(y) \not\subseteq I(z)$. Then, there exists a $\F_2$-covering collection that is not $\F$-covering.
\end{lemma}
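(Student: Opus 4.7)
The plan is to construct explicitly a section of the map $c: \mathsf{triangulations}(P_{m+1}) \to X'_{\F_2}(m)$ whose image fails to cover the point $y \in X'_\F(m)$. Recall that the image of any such section is automatically an $\F_2$-covering, so the question reduces to choosing, for each $z \in X'_{\F_2}(m)$, a preimage $T_z \in c^{-1}(z)$ so that $y$ lies outside the cluster torus $\mathbb{T}_{T_z}^{\F}$.

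The key step is to use the hypothesis $I(y) \not\subseteq I(z)$ together with Lemma \ref{lem:invalid-diagonals}. Fix $z \in X'_{\F_2}(m)$ and pick a diagonal $ij \in I(y) \setminus I(z)$. Since $ij$ is valid for $z$, by Lemma \ref{lem:invalid-diagonal} there is a triangulation $T_z$ containing $ij$ such that $z$ is a proper coloring of $T_z$; equivalently, $T_z \in c^{-1}(z)$. But $ij \in I(y)$ means that $y$ is not a proper coloring of any triangulation containing $ij$, so in particular $y \notin \mathbb{T}_{T_z}^{\F}$. Applying this for every $z$ gives a set $C := \{T_z : z \in X'_{\F_2}(m)\}$.

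Finally I would check that $C$ has the claimed properties. Since $c(T_z) = z$ for every $z$, the collection $C$ contains a preimage of every point of $X'_{\F_2}(m) = \manifold_{\F_2}$, so $\bigcup_{T \in C} \mathbb{T}_T^{\F_2} = \manifold_{\F_2}$, making $C$ an $\F_2$-covering. On the other hand $y \in X'_\F(m) = \manifold_\F$, yet by construction $y \notin \mathbb{T}_{T_z}^{\F}$ for any $T_z \in C$, so $C$ fails to be an $\F$-covering.

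I do not anticipate a real obstacle here: the content of the lemma is essentially a packaging of Lemma \ref{lem:invalid-diagonals} in the language of sections, and the only thing to be careful about is making sure that picking one preimage for each $z$ using the axiom of choice (which is trivial since $X'_{\F_2}(m)$ is finite) really yields a set which is simultaneously an $\F_2$-covering and misses $y$ over $\F$. The hypothesis $I(y) \not\subseteq I(z)$ is applied uniformly in $z$, so this is automatic.
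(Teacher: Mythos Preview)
Your proof is correct and follows essentially the same approach as the paper: for each $z \in X'_{\F_2}(m)$ you use $I(y) \not\subseteq I(z)$ (via Lemma~\ref{lem:invalid-diagonals} / Lemma~\ref{lem:invalid-diagonal}) to pick a triangulation $T_z$ admitting $z$ but not $y$ as a proper coloring, and then observe that $\{T_z\}$ is an $\F_2$-covering missing $y$ over $\F$. The paper's version is terser but the argument is identical.
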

\begin{proof}
By the assumptions of the lemma, for every $z \in X'_{\F_2}(m)$ there exists a triangulation $T_x$ such that $y$ does not belong to the cluster torus defined by $T_z$. The collection $\{T_z : z \in X'_{\F_2}(m)\}$ is then the desired collection, since it does not cover $y$. 
\end{proof}

Now let $m = 11$, so we consider triangulations of a $12$-gon, and let $\F \not\cong \F_2$ be any field. Then, $\# \F\PP^1 \geq 4$, so we can find distinct $a, b \in \F\PP^1 \setminus\{0, \infty\}$.  Let $y = (0,a,\infty,a,b,0,\infty,b,a,0,b,\infty)$. We will show that $y$ satisfies the conditions of Lemma \ref{lem:key-counterexample}, so by the proof of this lemma, this will give a collection that is $\F_2$-covering but not $\F$-covering. Since $\F$ is arbitrary, this will be the collection we are looking for.

We depict the element $y$, together with its invalid diagonals, in Figure \ref{fig:ex1}.

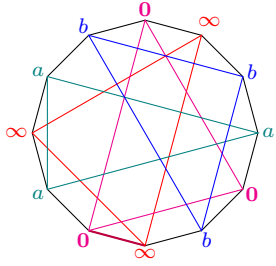
\begin{figure}
\begin{tikzpicture}
\node at (-7, 0) {};
      \newdimen\R
\R=1.5cm
   \draw (0:\R)
   \foreach \x in {30,60,90,...,360} {  -- (\x:\R) };
   \draw[thick, color=purple] (240: \R) -- (270:\R);
   \node at (270:\R+3) {\color{red}\tiny{$\infty$}};
   \node at (240:\R+4) {\color{magenta}\tiny{$\zero$}};
   \node at (210:\R+4) {\color{teal}\tiny{$a$}};
   \node at (180:\R+6) {\color{red}\tiny{$\infty$}};
   \node at (150:\R+4) {\color{teal}\tiny{$a$}};
   \node at (120:\R+4) {\color{blue} \tiny{$b$}};
   \node at (90:\R+4) {\color{magenta} \tiny{$\zero$}};
   \node at (60:\R+6) {\color{red}\tiny{$\infty$}};
   \node at (30:\R+4) {\color{blue} \tiny{$b$}};
   \node at (360:\R+4) {\color{teal}\tiny{$a$}};
   \node at (330:\R+4) {\color{magenta} \tiny{$\zero$}};
   \node at (300:\R+4) {\color{blue} \tiny{$b$}};
   \draw[color=red] (270:\R)--(180:\R)--(60:\R)--cycle;
   \draw[color=magenta] (240:\R)--(90:\R)--(330:\R)--cycle;
   \draw[color=teal] (210:\R)--(150:\R)--(360:\R)--cycle;
   \draw[color=blue] (120:\R)--(30:\R)--(300:\R)--cycle;
\end{tikzpicture}
\caption{The element $y = (\zero, a, \infty, a, b, \zero, \infty, b, a, \zero, b, \infty)$ and its invalid diagonals. Note that a diagonal is invalid if and only if it joins two of the same labels, and we have color-coded the diagonals depending on the labels they join.}
\label{fig:ex1}
\end{figure}

We assert that for every $z \in X'_{\F_2}(m)$ we have that $I(y) \not \subseteq I(z)$. To prove this we proceed by contradiction. Suppose that there exists $z \in X'_{\F_2}(m)$ with $I(y) \subseteq I(z)$, so every invalid edge of $y$ is also an invalid edge of $z$. We use this to determine the points that $z = (z_0,\dots,z_{11})$ can be.

\underline{\textbf{Claim 1:}} $z_6 = \infty$. To verify this, suppose that $z_6 \in \{\zero,\one\}$. This gives us two cases:

\begin{enumerate}
    \item $z_6 = \zero$. In this case the diagonal $(6,11)$ is a diagonal that is invalid for $z$ and between two vertices of two different colors. So one of the two polygons in which this diagonal divides the $12$-gon is such that there is no triangulation that has $z$ as a proper coloring. It follows that $z_i \in \{\zero,\infty\}$ for every $6 \leq i \leq 10$, or for every $1 \leq i \leq 6$. Since, further, we must have $z_j = \one$ for some $j$,  we claim that this will make one of the invalid diagonals for $y$, valid for $z$, which is a contradiction. To prove our claim, from Figure $\ref{fig:ex2}$ we see that, regardless of the value of $j$ that makes $z_j = \one$, we can find a diagonal $(j,k)$ crossing the diagonal $(6,11)$ that is invalid for $y$ and such that $z$ takes all values $\zero, \one, \infty$ on both sides of the diagonal $(j,k)$. The latter condition implies that $(j,k)$ is valid for $z$, which was our claim. 
        \begin{figure}
\begin{tikzpicture}
\node at (-3,0) {};
      \newdimen\R
\R=1.5cm
   \draw (0:\R)
   \foreach \x in {30,60,90,...,360} {  -- (\x:\R) };
   \draw[thick, color=purple] (240: \R) -- (270:\R);
   \node at (270:\R+3) {\tiny{$\infty$}};
   \node at (240:\R+4) {\tiny{$\zero$}};
   \node at (210:\R+4) {\tiny{$z_1$}};
   \node at (180:\R+6) {\tiny{$z_2$}};
   \node at (150:\R+4) {\tiny{$z_3$}};
   \node at (120:\R+4) {\tiny{$z_4$}};
   \node at (90:\R+4) {\tiny{$z_5$}};
   \node at (60:\R+6) {\tiny{$\zero$}};
   \node at (30:\R+4) {\tiny{$z_7$}};
   \node at (360:\R+4) {\tiny{$z_8$}};
   \node at (330:\R+4) {\tiny{$z_9$}};
   \node at (300:\R+4) {\tiny{$z_{10}$}};
   \draw[color=red] (270:\R)--(180:\R)--(60:\R)--cycle;
   \draw[color=magenta] (240:\R)--(90:\R)--(330:\R)--cycle;
   \draw[color=teal] (210:\R)--(150:\R)--(360:\R)--cycle;
   \draw[color=blue] (120:\R)--(30:\R)--(300:\R)--cycle;

   \node at (2.5,0) {$\Rightarrow$};

    \begin{scope}[shift = {(5,0)}]
   \draw (0:\R)
   \foreach \x in {30,60,90,...,360} {  -- (\x:\R) };
   \draw[thick, color=purple] (240: \R) -- (270:\R);
   \node at (270:\R+3) {\tiny{$\infty$}};
   \node at (240:\R+4) {\tiny{$\zero$}};
   \node at (210:\R+4) {\tiny{$z_1$}};
   \node at (180:\R+6) {\tiny{$z_2$}};
   \node at (150:\R+4) {\tiny{$z_3$}};
   \node at (120:\R+4) {\tiny{$z_4$}};
   \node at (90:\R+4) {\tiny{$z_5$}};
   \node at (60:\R+6) {\tiny{$\zero$}};
   \node at (30:\R+4) {\tiny{$\infty$}};
   \node at (360:\R+4) {\tiny{$\zero$}};
   \node at (330:\R+4) {\tiny{$\infty$}};
   \node at (300:\R+4) {\tiny{$\zero$}};
   \draw[color=red] (270:\R)--(180:\R)--(60:\R)--cycle;
   \draw[color=magenta] (240:\R)--(90:\R)--(330:\R)--cycle;
   \draw[color=teal] (210:\R)--(150:\R)--(360:\R)--cycle;
   \draw[color=blue] (120:\R)--(30:\R)--(300:\R)--cycle;

   \node at (2.5,0) {or};

    \begin{scope}[shift = {(5,0)}]
   \draw (0:\R)
   \foreach \x in {30,60,90,...,360} {  -- (\x:\R) };
   \draw[thick, color=purple] (240: \R) -- (270:\R);
   \node at (270:\R+3) {\tiny{$\infty$}};
   \node at (240:\R+4) {\tiny{$\zero$}};
   \node at (210:\R+4) {\tiny{$\infty$}};
   \node at (180:\R+6) {\tiny{$\zero$}};
   \node at (150:\R+4) {\tiny{$\infty$}};
   \node at (120:\R+4) {\tiny{$\zero$}};
   \node at (90:\R+4) {\tiny{$\infty$}};
   \node at (60:\R+6) {\tiny{$\zero$}};
   \node at (30:\R+4) {\tiny{$z_7$}};
   \node at (360:\R+4) {\tiny{$z_8$}};
   \node at (330:\R+4) {\tiny{$z_9$}};
   \node at (300:\R+4) {\tiny{$z_{10}$}};
   \draw[color=red] (270:\R)--(180:\R)--(60:\R)--cycle;
   \draw[color=magenta] (240:\R)--(90:\R)--(330:\R)--cycle;
   \draw[color=teal] (210:\R)--(150:\R)--(360:\R)--cycle;
   \draw[color=blue] (120:\R)--(30:\R)--(300:\R)--cycle;
    \end{scope}
    \end{scope}
\end{tikzpicture}
\caption{
Assuming $z_6 = \zero$, since the red diagonal joining $\zero$ with $\infty$ is invalid, we obtain one of the two colorings of the right. Since one of the labels must be $\one$, this will imply that one of the drawn diagonals is actually valid for $z$, a contradiction.}
\label{fig:ex2}
    \end{figure}
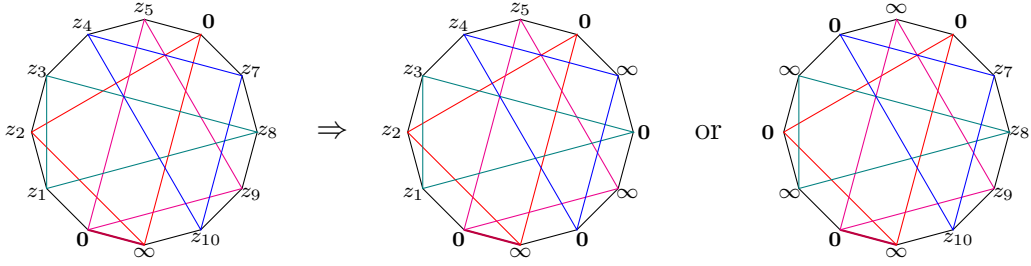

    \item $z_6 = \one$. Once again the diagonal $(6,11)$ is an invalid (for $z$) diagonal drawn between vertices of different colors, so $z_i \in \{\one,\infty\}$ for $6 \leq i \leq 11$, or $z_i \in \{\zero, \one\}$ for $0 \leq i \leq 6$. It is easy to see that the second case is not possible, so we only consider the first case in which $z$ is as shown in Figure \ref{fig:ex8} and we obtain that the diagonal $(0,9)$ is valid for $z$, a contradiction.

    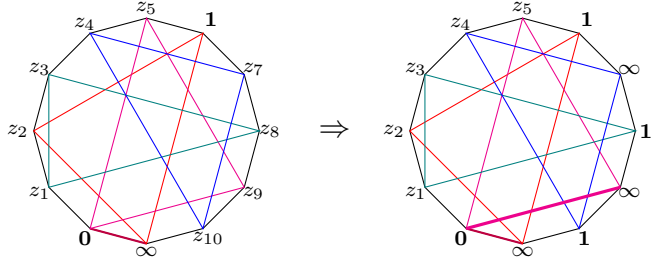
\begin{figure}
\begin{tikzpicture}
\node at (-5, 0) {};
      \newdimen\R
\R=1.5cm
   \draw (0:\R)
   \foreach \x in {30,60,90,...,360} {  -- (\x:\R) };
   \draw[thick, color=purple] (240: \R) -- (270:\R);
   \node at (270:\R+3) {\tiny{$\infty$}};
   \node at (240:\R+4) {\tiny{$\zero$}};
   \node at (210:\R+4) {\tiny{$z_1$}};
   \node at (180:\R+6) {\tiny{$z_2$}};
   \node at (150:\R+4) {\tiny{$z_3$}};
   \node at (120:\R+4) {\tiny{$z_4$}};
   \node at (90:\R+4) {\tiny{$z_5$}};
   \node at (60:\R+6) {\tiny{$\one$}};
   \node at (30:\R+4) {\tiny{$z_7$}};
   \node at (360:\R+4) {\tiny{$z_8$}};
   \node at (330:\R+4) {\tiny{$z_9$}};
   \node at (300:\R+4) {\tiny{$z_{10}$}};
   \draw[color=red] (270:\R)--(180:\R)--(60:\R)--cycle;
   \draw[color=magenta] (240:\R)--(90:\R)--(330:\R)--cycle;
   \draw[color=teal] (210:\R)--(150:\R)--(360:\R)--cycle;
   \draw[color=blue] (120:\R)--(30:\R)--(300:\R)--cycle;

   \node at (2.5,0) {$\Rightarrow$};

    \begin{scope}[shift = {(5,0)}]
   \draw (0:\R)
   \foreach \x in {30,60,90,...,360} {  -- (\x:\R) };
   \draw[thick, color=purple] (240: \R) -- (270:\R);
   \node at (270:\R+3) {\tiny{$\infty$}};
   \node at (240:\R+4) {\tiny{$\zero$}};
   \node at (210:\R+4) {\tiny{$z_1$}};
   \node at (180:\R+6) {\tiny{$z_2$}};
   \node at (150:\R+4) {\tiny{$z_3$}};
   \node at (120:\R+4) {\tiny{$z_4$}};
   \node at (90:\R+4) {\tiny{$z_5$}};
   \node at (60:\R+6) {\tiny{$\one$}};
   \node at (30:\R+4) {\tiny{$\infty$}};
   \node at (360:\R+4) {\tiny{$\one$}};
   \node at (330:\R+4) {\tiny{$\infty$}};
   \node at (300:\R+4) {\tiny{$\one$}};
   \draw[color=red] (270:\R)--(180:\R)--(60:\R)--cycle;
   \draw[color=magenta] (240:\R)--(90:\R)--(330:\R)--cycle;
   \draw[color=teal] (210:\R)--(150:\R)--(360:\R)--cycle;
   \draw[color=blue] (120:\R)--(30:\R)--(300:\R)--cycle;
   \draw[very thick, color=magenta] (240:\R)--(330:\R);
   \end{scope}
       \end{tikzpicture}
       \caption{If we assume $z_6 = \one$, then we must have the configuration on the right. But this implies that the thick diagonal is valid for $z$, a contradiction.}
       \label{fig:ex8}
    \end{figure}
\end{enumerate}

Thus, we conclude that $z_6 = \infty$. 

\vspace{1em}

\underline{\textbf{Claim 2:}} $z_2 = \infty$. We follow a similar strategy to Claim 1, assume that $z_2 \in \{\zero, \one\}$ and arrive to a contradiction. 

\begin{enumerate}
    \item $z_2 = \zero$. Since we have shown that $z_6 = \infty$, in this case the diagonal $(2,6)$ is an invalid diagonal between vertices of different colors. It follows that $z_i \in \{\zero,\infty\}$ for $2 \leq i \leq 6$ or for $6 \leq i \leq 10$. But for any of these to happen we would need consecutive repeated entries, which is impossible by the definition of $X(m)$. 

    \item $z_2 = \one$. In this case, we must have $z_1 = \infty$. So the diagonal $(2,11)$ will be valid for $z$ unless $z_3, z_4, \dots, z_{10}$ alternate between $\one$ and $\infty$. But it is easy to see that this cannot happen, see Figure \ref{fig:ex10}. 
    \end{enumerate}
    
     \begin{figure}
\begin{tikzpicture}
\node at (-8, 0) {};
      \newdimen\R
\R=1.5cm
   \draw (0:\R)
   \foreach \x in {30,60,90,...,360} {  -- (\x:\R) };
   \draw[thick, color=purple] (240: \R) -- (270:\R);
   \node at (270:\R+3) {\tiny{$\infty$}};
   \node at (240:\R+4) {\tiny{$\zero$}};
   \node at (210:\R+4) {\tiny{$z_1$}};
   \node at (180:\R+6) {\tiny{$\one$}};
   \node at (150:\R+4) {\tiny{$z_3$}};
   \node at (120:\R+4) {\tiny{$z_4$}};
   \node at (90:\R+4) {\tiny{$z_5$}};
   \node at (60:\R+6) {\tiny{$\infty$}};
   \node at (30:\R+4) {\tiny{$z_7$}};
   \node at (360:\R+4) {\tiny{$z_8$}};
   \node at (330:\R+4) {\tiny{$z_9$}};
   \node at (300:\R+4) {\tiny{$z_{10}$}};
   \draw[color=red] (270:\R)--(180:\R)--(60:\R)--cycle;
   \draw[color=magenta] (240:\R)--(90:\R)--(330:\R)--cycle;
   \draw[color=teal] (210:\R)--(150:\R)--(360:\R)--cycle;
   \draw[color=blue] (120:\R)--(30:\R)--(300:\R)--cycle;
   \draw[very thick, color=red] (180:\R)--(270:\R);
\end{tikzpicture}
\caption{If $z_2 = \one$ then we must have $z_1 = \infty$. For the thick diagonal to be invalid, we must have that $z_3, \dots, z_{11}$ alternate between $\one$ and $\infty$: But since $z_2 = \one$ we must have $z_3 = \infty, z_4 = \one, z_5 = \infty$, a contradiction with $z_6 = \infty$.}
\label{fig:ex10}
\end{figure}
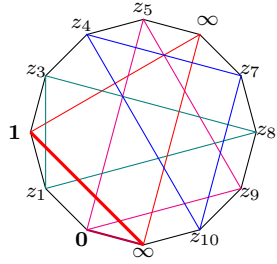

Thus, we conclude that $z_2 = \infty$. Note that it follows that $z_1 = \one$.
The following claims are proved similarly to Claims 1 and 2.

\underline{\textbf{Claim 3:}} $z_3 = \one$. 

\underline{\textbf{Claim 4:}} $z_5 = \zero$. This implies that $z_4 = \infty$. 

\underline{\textbf{Claim 5:}} $z_7 = \zero$.

\underline{\textbf{Claim 6:}} $z_9 = \zero$. This implies that $z_{10} = \one$.

Summarizing, $z$ must have the following form:

\begin{center}
    \begin{tikzpicture}
      \newdimen\R
\R=1.5cm
   \draw (0:\R)
   \foreach \x in {30,60,90,...,360} {  -- (\x:\R) };
   \draw[thick, color=purple] (240: \R) -- (270:\R);
   \node at (270:\R+3) {\tiny{$\infty$}};
   \node at (240:\R+4) {\tiny{$\zero$}};
   \node at (210:\R+4) {\tiny{$\one$}};
   \node at (180:\R+6) {\tiny{$\infty$}};
   \node at (150:\R+4) {\tiny{$\one$}};
   \node at (120:\R+4) {\tiny{$\infty$}};
   \node at (90:\R+4) {\tiny{$\zero$}};
   \node at (60:\R+6) {\tiny{$\infty$}};
   \node at (30:\R+4) {\tiny{$\zero$}};
   \node at (360:\R+4) {\tiny{$z_8$}};
   \node at (330:\R+4) {\tiny{$\zero$}};
   \node at (300:\R+4) {\tiny{$\one$}};
   \draw[color=red] (270:\R)--(180:\R)--(60:\R)--cycle;
   \draw[color=magenta] (240:\R)--(90:\R)--(330:\R)--cycle;
   \draw[color=teal] (210:\R)--(150:\R)--(360:\R)--cycle;
   \draw[color=blue] (120:\R)--(30:\R)--(300:\R)--cycle;
   \draw[very thick, color=blue] (120:\R)--(300:\R);
   %\draw[very thick, color=red] (180:\R)--(270:\R);
\end{tikzpicture}
\end{center}

It follows that regardless of the value of $z_8$, the thick diagonal is valid for $z$, which is a contradiction. This proves our claim.

\end{document}